\definecolor{citegreen}{rgb}{0,0.6,0}
\definecolor{refred}{rgb}{0.8,0,0}
\newcommand{\AAA}{{\mathrm{A}}} % Seconda forma
\newcommand{\EE}{{\mathrm{E}}} % Gradiente del funzionale
\newcommand{\HHH}{{\mathrm{H}}} % Curv media
\newcommand{\RR}{{\mathrm{R}}} % Riemann tensor
\def\RRR{{\mathrm R}}
\newcommand{\N}{\mathbb{N}}
\newcommand{\R}{\mathbb{R}}
\newcommand{\SSS}{\mathbb{S}}
\newcommand{\cE}{\mathcal{E}}
\newcommand{\cF}{\mathcal{F}}
\newcommand{\cL}{\mathcal{L}}
\newcommand{\cp}{\mathfrak{p}}
\newcommand{\cq}{\mathfrak{q}}
\newcommand{\id}{{\mathrm{id}}}
\newcommand{\ep}{\varepsilon}
\newcommand{\de}{\delta}
\newcommand{\vfi}{\varphi}
\newcommand{\ro}{\rho}
\newcommand{\De}{\Delta}
\newcommand{\Om}{\Omega}
\DeclarePairedDelimiter\scal{\langle}{\rangle}
\theoremstyle{plain}
\newtheorem{thm}{Theorem}[section] 
\newtheorem{prop}[thm]{Proposition}
\newtheorem{lemma}[thm]{Lemma}
\newtheorem{cor}[thm]{Corollary}
\theoremstyle{definition}
\theoremstyle{remark}
\newtheorem{remark}[thm]{Remark}
\numberwithin{equation}{section}
\title[Asymptotic convergence of evolving hypersurfaces]{Asymptotic convergence of evolving hypersurfaces}
\author{Carlo Mantegazza}
\address{Carlo Mantegazza\\
	Dipartimento di Matematica e Applicazioni,
	Universit\`a di Napoli Federico II, Via Cintia, Monte S. Angelo 80126 Napoli,
	Italy}
\email{c.mantegazza@sns.it}
\author{Marco Pozzetta}
\address{Marco Pozzetta\\
	Dipartimento di Matematica e Applicazioni,
	Universit\`a di Napoli Federico II, Via Cintia, Monte S. Angelo 80126 Napoli,
	Italy
}
\email{marco.pozzetta@unina.it}
\date{\today}
\keywords{Geometric flows, \L ojasiewicz--Simon gradient inequality, smooth convergence}
\subjclass[2010]{53E40, 35R01, 46N20}
\begin{document}

\begin{abstract}
If $\psi:M^n\to \mathbb{R}^{n+1}$ is a smooth immersed closed hypersurface, we consider the functional
\[
\mathcal{F}_m(\psi) = \int_M 1 + |\nabla^m \nu |^2 \, d\mu,
\]
where $\nu$ is a local unit normal vector along $\psi$, $\nabla$ is the Levi--Civita connection of the Riemannian manifold $(M,g)$, with $g$ the pull--back metric induced by the immersion and $\mu$ the associated volume measure. We prove that if $m>\lfloor n/2 \rfloor$ then the unique globally defined smooth solution to the $L^2$--gradient flow of $\mathcal{F}_m$, for every initial hypersurface, smoothly converges asymptotically to a critical point of $\mathcal{F}_m$, up to diffeomorphisms. The proof is based on the application of a \L ojasiewicz--Simon gradient inequality for the functional $\mathcal{F}_m$.
\end{abstract}

\maketitle

\setcounter{tocdepth}{1}
\tableofcontents

\section{Introduction}

We consider a closed connected differentiable manifold $M^n$ of dimension $n\ge 1$ and $\psi:M^n\to \R^{n+1}$ a smooth immersion of $M^n$ in the Euclidean space $\R^{n+1}$. We shall usually omit the superscript $n$ denoting the dimension of $M$. For such an immersion, we always assume that $M$ is endowed with the metric tensor $g=\psi^*\scal{\cdot,\cdot}_{\R^{n+1}}$, induced by the immersion $\psi$ that we also sometimes simply denote as $\scal{\cdot,\cdot}$. The Levi--Civita connection of the Riemannian manifold $(M,g)$ is denoted by $\nabla$ and the associated volume measure by $\mu$. Then, for $m\in \N$ with $m\ge 1$, we consider the functional
\[
\cF_m(\psi)\coloneqq \int_M  1+ |\nabla^m\nu|^2 \,d\mu
\]
on the smooth immersions $\psi:M^n\to \R^{n+1}$, where $\nu$ is a (locally defined) unit normal vector field along $\psi$. Let us specify that in the above definition, if $\nu=\nu^\alpha e_\alpha$ and $e_\alpha$ is the standard basis in $\R^{n+1}$, we mean $|\nabla^m\nu|^2\coloneqq \sum_{\alpha=1}^{n+1} |\nabla^m\nu^\alpha|^2$. Notice that $\cF_m$ is independent of the local choice of the unit normal $\nu$ and it is well-defined without further hypotheses on $M$. However, by the discussion below in Remark \ref{rem:Orientability}, we shall always assume without loss of generality that $M$ is orientable and that a global choice of unit normal field $\nu$ is understood.

We observe that in case $n=m=1$, we recognize the well-known elastic energy of closed curves $\psi:\SSS^1\to \R^2$, i.e., $\mathcal{F}_1(\psi)=\int_{\SSS^1} 1+|k|^2 d s$, where $k$ is the curvature of $\psi$. If instead $n=2$, $m=1$, then one has $|\nabla \nu|^2=|\AAA|^2$, where $\AAA$ is the second fundamental form (see \eqref{gwein}), and then $\mathcal{F}_1$ yields the sum of the area and of (an equivalent form of) the Willmore energy of an immersed surface $\psi:M^2\to \R^3$. Let us also notice that if instead $n=m=2$, then $\mathcal{F}_2(\psi)=\int_{M} 1+|\nabla \AAA|^2 +\cp_0(\AAA,\AAA,\AAA,\AAA) d \mu$ for any given immersion $\psi:M^2\to \R^3$, where $\cp_0$ is some polynomial as in \eqref{eq:DefP} below.

By the formula for the first variation of $\cF_m$ (see Theorem~\ref{thm:FirstVariation}), one can prove that the associated $L^2$--gradient flow is defined by an evolution equation
\[
\frac{\partial\vfi}{\partial t}(p,t) = - \EE_m(\vfi_t)(p)\nu_t(p)
\]
for a smooth map $\vfi:M\times[0,T)\to \R^{n+1}$ (where $\varphi_t=\varphi(\cdot,t):M\to\R^{n+1}$ describes the moving hypersurface and $\nu_t$ is its unit normal vector field), which turns out to be a (quasilinear and degenerate) parabolic system of PDEs.\\
If $m> \lfloor n/2 \rfloor$, the study carried out in~\cite{Ma02} shows that for every initial smooth immersed hypersurface $\vfi_0:M\to\R^{n+1}$, there exists a unique smooth solution $\vfi_t$ with initial datum $\vfi_0$, defined for all positive times; moreover, $\vfi_t$ \emph{sub--converges} to a critical point $\vfi_\infty:M\to \R^{n+1}$ of the functional $\cF_m$, that is, such that $\EE_m(\vfi_\infty)=0$ (see Theorem~\ref{thm:Subconvergence}). By {\em sub--convergence} we mean that for some sequence of times $t_j\to+\infty$, the sequence $\vfi_{t_j}$ smoothly converges to $\vfi_\infty$, up to diffeomorphisms and translations in $\R^{n+1}$. More precisely, there exist a sequence of smooth diffeomorphisms $\sigma_j:M\to M$ and a sequence of points $p_j\in\R^{n+1}$ such that the sequence of immersions $\vfi_{t_j}\circ \sigma_j - p_j$ converge to $\vfi_\infty$ in $C^{k}(M)$, for any $k\in \N$. From such a sub--convergence result it is anyway not possible to immediately deduce that the flow {\em fully converges}, i.e., that there exists the full limit of $\vfi_t$ as $t\to+\infty$ in $C^k(M)$ for any $k$ (up to diffeomorphisms). Actually, the sub--convergence of the flow does not even guarantee that the limits of the flow along different diverging sequences of times coincide. Moreover, as the evolution equations involved here are of order greater or equal than four with respect to the parametrization, it is not even possible to conclude that the flow stays in a compact set of $\R^{n+1}$ for all times by means of comparison arguments, as maximum principles are not applicable.

In this work we address this issue, that is, we prove that the gradient flow of $\cF_m$ does actually converge, for any initial hypersurface. Our main result is the following theorem.

\begin{thm}\label{thm:Convergence}
Let $\vfi_0:M^n\to\R^{n+1}$ be a smooth immersion of a closed hypersurface and let $m > \lfloor n/2 \rfloor$. Then the unique smooth solution $\vfi:M\times [0,+\infty) \to \R^{n+1}$ to the evolution problem
\[
\begin{cases}
\frac{\partial\vfi}{\partial t} = - \EE_m(\vfi_t) \nu_t\\
\vfi(0,\cdot)=\vfi_0
\end{cases}
\]
converges in $C^k(M)$ to a smooth critical point $\vfi_\infty:M\to \R^{n+1}$ of $\cF_m$ as $t\to+\infty$, for every $k\in \N$ up to diffeomorphisms of $M$; more precisely, there exists a one-parameter family of diffeomorphisms $\sigma_t:M\to M$ such that the flow $\vfi_t\circ \sigma_t$ converges in $C^k(M)$ to a smooth critical point $\vfi_\infty$ of $\cF_m$ as $t\to+\infty$, for every $k\in \N$.\\
In particular, there exists a compact set $K\subseteq \R^{n+1}$ such that $M_t=\varphi_t(M)\subseteq K$ for any $t\ge0$.
\end{thm}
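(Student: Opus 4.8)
The plan is to upgrade the sub-convergence result (Theorem~\ref{thm:Subconvergence}) to full convergence by means of a \L ojasiewicz--Simon gradient inequality for $\cF_m$, following the now-classical scheme pioneered by Simon for geometric evolution equations. The main steps are as follows.

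First, I would establish a \L ojasiewicz--Simon inequality near the sub-limit $\vfi_\infty$. Since $\EE_m(\vfi_\infty)=0$, the immersion $\vfi_\infty$ is a critical point of $\cF_m$, and one needs an estimate of the form $|\cF_m(\psi)-\cF_m(\vfi_\infty)|^{1-\theta}\le C\,\|\EE_m(\psi)\|_{L^2}$ for all $\psi$ in a suitable $C^{k}$-neighborhood of $\vfi_\infty$ (modulo the action of the diffeomorphism group and translations), with $\theta\in(0,1/2]$. This requires analyzing the linearization of the gradient map $\EE_m$ at $\vfi_\infty$: one verifies that, restricted to normal variations transversal to the kernel coming from reparametrizations and translations, this linearized operator is an elliptic operator with closed range and finite-dimensional kernel, whence the abstract \L ojasiewicz--Simon framework (analyticity of $\cF_m$ in appropriate function spaces, Fredholmness of the Hessian) applies. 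The analyticity of $\psi\mapsto\cF_m(\psi)$ in the relevant Sobolev/H\"older topology should follow from the explicit polynomial structure of the integrand in terms of $\AAA$ and its covariant derivatives, together with analyticity of the metric-dependent quantities.

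Second, once the \L ojasiewicz--Simon inequality is available, I would run the standard energy argument. Along the flow, $\frac{d}{dt}\cF_m(\vfi_t)=-\|\EE_m(\vfi_t)\|_{L^2}^2\le 0$, so $\cF_m(\vfi_t)$ decreases to some limit $\cF_\infty\ge\cF_m(\vfi_\infty)$; by sub-convergence $\cF_\infty=\cF_m(\vfi_\infty)$. For times $t$ large enough that $\vfi_t$ (suitably reparametrized) lies in the \L ojasiewicz neighborhood of $\vfi_\infty$, one estimates $-\frac{d}{dt}\big(\cF_m(\vfi_t)-\cF_\infty\big)^{\theta}\ge c\,\|\EE_m(\vfi_t)\|_{L^2}=c\,\|\partial_t\vfi_t\|_{L^2}$, which upon integration yields that $\int_{t}^{\infty}\|\partial_t\vfi_s\|_{L^2}\,ds<\infty$; hence the velocity is integrable in $L^2$ and the flow has a unique $L^2$-limit. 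Combined with the uniform smooth bounds on the flow furnished by Theorem~\ref{thm:Subconvergence} (which give precompactness in every $C^k$), the $L^2$-limit upgrades to a $C^k$-limit for every $k$, and this limit must be the critical point $\vfi_\infty$. The reparametrizing diffeomorphisms $\sigma_t$ are constructed along the way so as to keep the flow in the good neighborhood (writing the motion as a normal graph over $\vfi_\infty$ once close enough), and the translations $p_j$ in the sub-convergence statement are absorbed once one knows the full limit exists. Finally, existence of the compact set $K$ is immediate: a $C^0$-convergent flow has bounded image.

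The main obstacle is the first step: proving the \L ojasiewicz--Simon inequality for $\cF_m$. The functional is only \emph{degenerate} parabolic as a flow, its Euler--Lagrange operator $\EE_m$ has order $2m$ and a large symmetry group (the full diffeomorphism group of $M$ plus Euclidean translations), so the linearized operator at $\vfi_\infty$ is far from injective, and one must carefully set up the analysis on a complement of its kernel — typically by restricting to normal graphs and quotienting out the tangential gauge — and check that the resulting operator is Fredholm of index zero between the right pair of Banach spaces, with an analytic dependence on the graph function. Handling the gauge-fixing and the analyticity simultaneously, in a way compatible with the flow (which does not a priori stay a normal graph), is the delicate technical core; everything else is a fairly routine adaptation of Simon's argument.
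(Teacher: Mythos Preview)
Your proposal is correct and follows essentially the same route as the paper: the paper proves the \L ojasiewicz--Simon inequality (Corollary~\ref{cor:Loja}) by restricting to normal graphs over $\vfi_\infty$, checking analyticity of $\cE_m(f)=\cF_m(\vfi_\infty+f\nu_\infty)$ and Fredholmness of index zero of the second variation (Proposition~\ref{prop:Fredholm}), and then runs exactly the energy argument you describe---differentiating $H(t)=|\cF_m(\vfi_t)-\cF_m(\vfi_\infty)|^\alpha$, integrating to get an $L^2$ Cauchy bound, and interpolating against the uniform estimates~\eqref{univest} to upgrade to $C^k$. One small simplification relative to what you anticipate: the paper does not need to explicitly quotient out translations in the \L ojasiewicz setup, since the Fredholm-of-index-zero statement already tolerates a finite-dimensional kernel; the shifts $p_j$ from sub--convergence are handled directly in the graph representation (Lemma~\ref{lem:GraficoNormale}) and disappear once the $L^2$ Cauchy estimate is in hand.
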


We remark that the assumption $m > \lfloor n/2 \rfloor$ is sharp, in fact if $m\le  \lfloor n/2 \rfloor$ then one gets flows that may develop singularities in finite time.

A relevant motivation for the study of the gradient flow of the functionals $\cF_m$ goes back to Ennio~De~Giorgi. In one of his last papers, he conjectured that any compact $n$--dimensional hypersurface in $\R^{n+1}$, evolving by the gradient flow of certain functionals depending on sufficiently high derivatives of the curvature does not develop singularities during the flow (see~\cite{DeGiorgiIT} and~\cite[Section~5, Conjecture~2]{DeGiorgiEN} for an English translation, see also~\cite[Section~9]{Ma02}). This result was central in his program to approximate singular geometric flows, as the mean curvature flow, with sequences of smooth ones (see~\cite[Sec. 9]{Ma02} and~\cite{BellMantegNovaga} for a result in this direction).
The functionals $\cF_m$ are strictly related to the ones proposed by De~Giorgi since, roughly speaking, the derivative of the normal field yields the curvature of $M$ (see~\eqref{gwein}). Though not exactly the same, the energies $\cF_m$ can then play the same role in the approximation process he suggested and the analysis of the asymptotic behavior of their gradient flow is another step in understanding such process.

The main tool in the proof of Theorem~\ref{thm:Convergence} is a \emph{\L ojasiewicz--Simon gradient inequality} for the functional $\cF_m$ (see Corollary~\ref{cor:Loja}). Such an estimate bounds a {\em less--than--$1/2$} power of the difference in ``energy'' (the value of the functional) between a critical point and a point sufficiently close to it in terms of a suitable norm of the first variation of the functional. The use of this kind of inequalities in the study of the convergence of parabolic equations of gradient--type goes back to \L ojasiewicz~\cite{Loja63, Loja84} and to the seminal paper of Simon~\cite{Si83}. More recently, useful sufficient hypotheses implying a \L ojasiewicz--Simon gradient inequality have been derived in~\cite{Ch03} (see also \cite{FeehanMaridakis} for several recent generalizations). Building on the abstract tools developed in~\cite{Ch03}, a first recent application of the inequality to get convergence of an extrinsic geometric flow is contained in~\cite{ChFaSc09}, where the authors investigate the Willmore flow of surfaces in neighborhoods of critical points. In the last years and in the context of higher order geometric gradient flows, the \L ojasiewicz--Simon inequality appeared as a tool for ``promoting'' the sub--convergence of a flow to its full convergence. As applications of this method we mention~\cite{DaPoSp16}, in which it is proved the full convergence of the elastic flow of open clamped curves, and~\cite{Po20Loja}, in which the sub--convergence of the $p$--elastic flow of closed curves on Riemannian manifolds is shown to imply the full convergence. The analysis in~\cite{Po20Loja} led to a further simplification and deeper understanding of the method, which is exposed in~\cite{MaPo20Loja}. In this work we essentially generalize the strategy employed in~\cite{MaPo20Loja} to the gradient flows of the functionals $\cF_m$. Moreover we tried to keep most of the arguments as general as possible, in order that the method could be possibly applied also to other geometric gradient flows, also in the context of extrinsic geometric flows in higher codimension possibly in Riemannian manifolds.

In the broad framework of geometric flows, \L ojasiewicz--Simon gradient inequalities found many other notable applications. For example, the study of singularities of mean curvature flow can be reconducted to the study of the smooth convergence of a suitable extrinsic geometric flow. Smooth convergence of such flow has been proved exploiting \L ojasiewicz--Simon inequalities in some relevant particular cases in \cite{Schulze, ColdingMinicozziUniqueness, ChodoshSchule}. Let us also mention \cite{ChoiMantoulidis}, where a classification of ancient solutions to a family of geometric flows in Riemannian manifolds is derived. \L ojasiewicz--Simon inequalities have been employed also in the context of intrinsic geometric flows. We refer, for instance, to the study of the rate of convergence of Yamabe flows in \cite{CarlottoChodoshRub}, or to the deep investigation on the Yang--Mills flow contained in \cite{FeehanGlobalYM} (see also references therein).

\subsection*{Notation and geometry of submanifolds} Let $M^n$ be closed, connected, and orientable. Let $\psi:M\to \R^{n+1}$ be a smooth immersion of $M$ and let $\nu$ be global unit normal field on $M$ along $\psi$.

\begin{remark}\label{rem:Orientability}
In case $M$ is not orientable, given an initial immersion $\vfi_0:M\to \R^{n+1}$, we can consider the canonical two--fold cover $\pi:\widetilde{M}\to M$, where $\widetilde{M}$ is orientable and the initial immersion $\widetilde{\vfi}_0= \vfi_0 \circ \pi$. By uniqueness of the flow $\vfi_t$ starting at $\vfi_0$ (Theorem~\ref{thm:Subconvergence}), it follows that the flow $\widetilde{\vfi}_t$ starting at $\widetilde{\vfi}_0$ is just $\widetilde{\vfi}_t = \vfi_t \circ \pi$. Therefore, if we prove that $\widetilde{\vfi}_t$ smoothly converges, then the same holds for the flow $\vfi_t$. Hence, also in this case Theorem~\ref{thm:Convergence} holds.	
\end{remark}

As the metric $g$ is obtained pulling it back with $\psi$, in local coordinates $\{x_i\}$ on $M$, we have 
$$
g_{ij}(x)=\left\langle\frac{\partial{{\psi}}(x)}{\partial
    x_i},\frac{\partial{{\psi}}(x)}{\partial
    x_j}\right\rangle
$$
and the canonical volume measure induced by the metric $g$ is given in local coordinates by $\mu=\sqrt{\det(g_{ij})\,}\,{\mathcal L}^n$ where 
${\mathcal L}^n$ is the standard Lebesgue measure on $\R^n$.\\
The induced covariant derivative on $(M,g)$ of a tangent vector field $X$ is given by
$$
\nabla_jX^i=\frac{\partial\,}{\partial
  x_j}X^i+\Gamma^{i}_{jk}X^k
$$
(in the whole paper we will adopt the Einstein convention of summation over repeated indices) 
where the Christoffel symbols $\Gamma^{i}_{jk}$ are expressed by the formula
$$
\Gamma^{i}_{jk}=\frac{1}{2} g^{il}\left(\frac{\partial\,}{\partial
    x_j}g_{kl}+\frac{\partial\,}{\partial
    x_k}g_{jl}-\frac{\partial\,}{\partial
    x_l}g_{jk}\right)\,.
$$
We will write $\partial_i$ for the coordinates derivatives, opposite to the covariant ones $\nabla_i$.
With $\nabla^k T$ we will mean the $k$--th iterated covariant derivative of a tensor $T$.
If $f$ is a smooth function on a smooth immersed hypersurface, the symbol $\nabla f$ denotes its gradient and $\nabla^2 f$ its Hessian, whose trace is the Laplacian $\Delta f$.

The second fundamental form $\AAA$ of the immersion $\psi$ is the bilinear symmetric form acting on any pair of tangent vector fields $X,Y$ to the hypersurface as
\[
\AAA(X,Y)=-\bigl\langle\nabla^{\R^{n+1}}_X Y,\nu\rangle,
\]
given a (global, since we assumed $M$ orientable) choice of the unit normal vector $\nu$ (we will usually identify $TM$ with $d\psi(TM)\subseteq\R^{n+1}$ and in this formula the field $Y$ is extended locally around $\psi(M)$ in $\R^{n+1}$). Hence $\AAA$ is defined up to a sign, that is, up to the choice of $\nu$, while $\AAA\nu$ is independent of the choice of $\nu$. In local coordinates, the components $ h_{ij}$ of $\AAA$ are given by
$$
 h_{ij}(x)=-\left\langle\nu(x),\frac{\partial^2{{\psi}}(x)}
{\partial x_i\partial x_j}\right\rangle\,.
$$
We recall that the following Gauss--Weingarten relations hold
\begin{equation}\label{gwein}
\partial^2_{ij}\psi=\Gamma^k_{ij} \partial_k \psi- h_{ij}\nu,
\qquad
\partial_i \nu = h_{ij}g^{jk}\partial_k \psi.
\end{equation}
The mean curvature $\HHH$ of $\psi$ is the trace of $\AAA$, that is
$$
\HHH(x)=g^{ij}(x) h_{ij}(x).
$$

By means of the Gauss equation, the Riemann tensor can be 
expressed via the second fundamental form, in local coordinates, as follows
\begin{align*}
\RRR_{ijkl}\,=&\,h_{ik}h_{jl}-h_{il}h_{jk}\,.
\end{align*}
Hence, the formulae for the interchange of covariant derivatives become
$$
\nabla_i\nabla_jX^s-\nabla_j\nabla_iX^s=\RRR_{ijkl}g^{ks}X^l=\RRR_{ijl}^sX^l=
\left(h_{ik}h_{jl}-h_{il}h_{jk}\right)g^{ks}X^l\,,
$$
\begin{equation}\label{ichange}
\nabla_i\nabla_j\omega_k-\nabla_j\nabla_i\omega_k=\RRR_{ijkl}g^{ls}\omega_s=\RRR_{ijk}^s\omega_s=
\left(h_{ik}h_{jl}-h_{il}h_{jk}\right)g^{ls}\omega_s\,,
\end{equation}
where we recall that by $\nabla_i\nabla_j X^s$ we mean the $s$-th component of the field $(\nabla^2X)(\partial_i,\partial_j)$.

Abusing a little the notation, if $T_1,...,T_N$ is a finite family of tensors, we denote by
\[
\circledast_{k=1}^N T_k \coloneqq T_1 * \ldots * T_N
\]
a generic contraction of some indices of the tensors $T_1,...,T_N$ using the coefficients $g_{ij}$ or $g^{ij}$. We will also denote
\begin{equation}\label{eq:DefP}
	\cp_s(T_1,\ldots, T_N) \coloneqq \sum_{i_1+\ldots+i_N=s} C_{i_1,\ldots, i_N} \nabla^{i_1} T_1 * \ldots * \nabla^{i_N} T_N,
\end{equation}
for some constants $C_{i_1,\ldots, i_N} \in \R$. Notice that in every additive term of $\cp_s(T_1,\ldots, T_N)$ each tensor appears exactly once (there are no repetitions).\\
We will use instead the symbol $\cq^s(T_1,\ldots,T_N)$ for ``polynomials'' of the form
\[
\cq^s(T_1,\ldots,T_N) \coloneqq \sum \left[ \circledast_{i_1=1}^{M_1} \nabla^{i_1} T_1 \, \ldots \, \circledast_{i_N=1}^{M_N} \nabla^{i_N} T_N  \right],
\]
with $M_j\ge 1$ for any $j=1,...,N$ and with
\[
s = \sum_{i_1=1}^{M_1} (i_1+1)
+ \ldots +
\sum_{i_N=1}^{M_N} (i_N+1).
\]
Hence, repetitions are allowed in $\cq^s$ and in every additive term there must be present every argument of $\cq^s$.

We notice that, by the above relations, the Riemann tensor of the hypersurface can be written as $\RR = \AAA * \AAA$, exploiting the above notation.

\section{Preliminary computations}

Let us recall the first variation formula for the functional $\cF_m$.

\begin{thm}[{\cite[Theorem~3.7]{Ma02}}]\label{thm:FirstVariation}
	Let $\vfi_t:M^n\to\R^{n+1}$ be a smooth family of immersions smoothly depending on $t\in(-\ep,\ep)$ and $X_t=\partial_t \vfi_t$. Then, for every $t\in(-\ep,\ep)$, there holds
	\[
	\frac{d}{dt} \cF_m(\vfi_t) = \int_M \EE_m(\vfi_t)  \scal{\nu, X_t} \,d\mu_t,
	\]
	with
	\[
	\EE_m(\vfi_t)=  2(-1)^{m} \De^m \HHH + \cq^{2m+1}(\nabla \nu, \AAA) +\HHH ,
	\]
	where all the quantities are relative to the hypersurface $\varphi_t$.
\end{thm}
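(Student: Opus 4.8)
The plan is to establish the formula by a direct first--variation computation. Since the integrand $1+|\nabla^m\nu|^2$ is a scalar geometric quantity, $\cF_m$ is invariant under diffeomorphisms of $M$ (and under translations of $\R^{n+1}$), hence its first variation at any immersion in a direction $X$ depends only on the normal component $\scal{\nu,X}$ of $X$; therefore it suffices to prove the identity for a purely normal variation $\pa_t\vfi=f\nu$, the general case then following because the right--hand side of the asserted formula depends on $X_t$ only through $f_t=\scal{\nu_t,X_t}$. From here on I fix a time, drop the subscript $t$, and regard $\nabla f$ as the $\R^{n+1}$--valued tangential gradient of $f$ along $\vfi$.

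First I would record the variations of the elementary quantities, which follow immediately from the definitions and the Gauss--Weingarten relations \eqref{gwein}:
\[
\pa_t g_{ij}=2f\,h_{ij},\qquad \pa_t g^{ij}=-2f\,h^{ij},\qquad \pa_t\mu=f\HHH\,\mu,\qquad \pa_t\nu=-\nabla f,
\]
\[
\pa_t\Gamma^k_{ij}=g^{kl}\bigl(\nabla_i(fh_{jl})+\nabla_j(fh_{il})-\nabla_l(fh_{ij})\bigr),
\]
so that $\pa_t\Gamma$ is schematically of the form $f*\nabla\AAA+\nabla f*\AAA$. Then I would compute $\pa_t(\nabla^m\nu)$ by iterating the rule $\pa_t(\nabla T)=\nabla(\pa_t T)-(\pa_t\Gamma)*T$ starting from the base case $\pa_t\nu=-\nabla f$: the only term carrying $m+1$ derivatives of $f$ is $-\nabla^{m+1}f$, and all remaining terms — produced by $\pa_t\Gamma$, by commuting covariant derivatives via \eqref{ichange} (which generates only $\RR=\AAA*\AAA$), and by the Gauss--Weingarten expansion of the $\nabla^b(\pa_l\vfi)$ occurring along the way — involve at most $m$ derivatives of $f$ times contractions of iterated covariant derivatives of $\nabla\nu$ and $\AAA$. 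Pairing $\pa_t(\nabla^m\nu)$ with $\nabla^m\nu$, adding the contribution of $\pa_t g^{\bullet\bullet}$ from the inverse metrics appearing in $|\nabla^m\nu|^2$, and using $\nabla_j\nu^\alpha=h^k_j\pa_k\vfi^\alpha$ — so that contracting $\nabla^m\nu$ against $d\vfi$ gives $\nabla^{m-1}\AAA$ up to lower order terms, and $(\AAA^2)_{ij}=\scal{\nabla_i\nu,\nabla_j\nu}$ — one gets
\[
\pa_t|\nabla^m\nu|^2=-2\,\scal{\nabla^{m-1}\AAA,\nabla^{m+1}f}+(\text{lower order terms}),
\]
where in the leading term $\scal{\cdot,\cdot}$ is the complete $g$--contraction of the $m+1$ indices of $\nabla^{m-1}\AAA$ with those of $\nabla^{m+1}f$, and each lower order term is a product of at most $m$ derivatives of $f$ with a $\cq$--type contraction of iterated covariant derivatives of $\nabla\nu$ and $\AAA$.

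Next, using $\pa_t\mu=f\HHH\,\mu$,
\[
\frac{d}{dt}\cF_m(\vfi)=\int_M\pa_t|\nabla^m\nu|^2\,d\mu+\int_M\bigl(1+|\nabla^m\nu|^2\bigr)f\HHH\,d\mu,
\]
and I would integrate by parts in the first integral until no derivative acts on $f$. The leading term becomes $-2(-1)^{m+1}\int_M f\,\bigl(\text{complete }(m+1)\text{-fold divergence of }\nabla^{m-1}\AAA\bigr)\,d\mu$; iterating the Codazzi identity $g^{kl}\nabla_k h_{lj}=\nabla_j\HHH$ and commuting derivatives (again producing only $\AAA*\AAA$ terms) one checks that this divergence equals $\Delta^m\HHH$ up to a $\cq$--type polynomial in $\AAA$, so this term contributes exactly $2(-1)^m\Delta^m\HHH$ to $\EE_m$. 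The summand $\int_M f\HHH\,d\mu$, coming from the area part $\int_M 1\,d\mu$, contributes the term $\HHH$. Every remaining contribution is $\int_M f$ times a complete contraction of iterated covariant derivatives of $\nabla\nu$ and $\AAA$: each such term still carries an undifferentiated copy of $\nabla^m\nu$ (hence a factor $\nabla\nu$) together with an $\AAA$ coming from the variation of the metric, of the curvature or of $\pa_t\Gamma$, using $(\AAA^2)_{ij}=\scal{\nabla_i\nu,\nabla_j\nu}$ to absorb even powers of $\AAA$ when necessary; and a homogeneity count under the rescaling $\vfi\mapsto\la\vfi$ — which sends $\int_M|\nabla^m\nu|^2\,d\mu\mapsto\la^{n-2m}\int_M|\nabla^m\nu|^2\,d\mu$ and so forces the part of $\EE_m$ originating from this summand to be homogeneous of degree $-2m-1$ — pins the total weight of all these terms to be $2m+1$ in the sense of \eqref{eq:DefP}. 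Hence their sum is of the form $\cq^{2m+1}(\nabla\nu,\AAA)$, and altogether $\EE_m=2(-1)^m\Delta^m\HHH+\cq^{2m+1}(\nabla\nu,\AAA)+\HHH$, as claimed.

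The main obstacle is this computation of $\pa_t(\nabla^m\nu)$ together with the subsequent integration by parts: one has to push $\pa_t$ through the $m$ iterated covariant derivatives while controlling everything generated by $\pa_t\Gamma$, by the curvature \eqref{ichange} and by the normal/tangential splitting \eqref{gwein}, and then recognise that after $m+1$ integrations by parts the top--order part collapses \emph{exactly} to $2(-1)^m\Delta^m\HHH$ via repeated use of Codazzi's equation. The homogeneity argument makes the bookkeeping of the lower order terms essentially automatic — they must all assemble into $\cq^{2m+1}(\nabla\nu,\AAA)$ — but the precise leading coefficient and sign have to be verified by hand. The case $m=1$, where $|\nabla\nu|^2=|\AAA|^2$ and the statement reduces to the classical first variation of $\int_M 1+|\AAA|^2\,d\mu$, is a convenient consistency check.
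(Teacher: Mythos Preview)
The paper does not prove this statement: Theorem~\ref{thm:FirstVariation} is simply quoted from~\cite{Ma02} (Theorem~3.7 there) with no argument given, so there is no proof in the present paper to compare yours against.

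Your outline is the standard route and is, at the level of the leading term, correct: the reduction to normal variations via diffeomorphism invariance is legitimate, the evolution formulae you list are the right ones, the identification of $-\nabla^{m+1}f$ as the top--order piece of $\partial_t(\nabla^m\nu)$ is correct, and after projecting $\nabla^m\nu$ tangentially onto $\nabla^{m-1}\AAA\cdot d\vfi$, integrating by parts $m+1$ times and using Codazzi plus~\eqref{ichange} one does recover $2(-1)^m\Delta^m\HHH$. The scaling argument correctly pins the total order of the remainder to $2m+1$.

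There is, however, a genuine gap in your treatment of the lower order terms. You assert that ``each such term still carries an undifferentiated copy of $\nabla^m\nu$ (hence a factor $\nabla\nu$)'', but this is not true for the lower order terms produced \emph{after} you have replaced the tangential part of $\nabla^m\nu$ by $\nabla^{m-1}\AAA$ and integrated by parts: rearranging the resulting $(m+1)$--fold divergence of $\nabla^{m-1}\AAA$ into $\Delta^m\HHH$ requires commuting covariant derivatives, and each commutator spawns a curvature factor $\RR=\AAA*\AAA$ acting on $\nabla^{k}\AAA$. Subsequent Leibniz expansions can differentiate those $\AAA$'s, so one meets monomials of the shape $\nabla^{a}\AAA*\nabla^{b}\AAA*\nabla^{c}\AAA$ which contain no $\nabla\nu$ factor at all. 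Your remedy $(\AAA^2)_{ij}=\scal{\nabla_i\nu,\nabla_j\nu}$ only rewrites an undifferentiated contracted pair $h_i^{\,k}h_{kj}$ and does not apply to such terms. The scaling argument tells you the weight is right but says nothing about the presence of both a $\nabla\nu$ and an $\AAA$ factor, which is precisely what the notation $\cq^{2m+1}(\nabla\nu,\AAA)$ asserts (by the paper's definition each argument must occur). To close this you would either have to track the structure of the remainder terms through the computation without passing to $\nabla^{m-1}\AAA$ prematurely, or argue separately that every monomial arising from the commutator expansion can be rewritten so that at least one factor is a contraction of some $\nabla^{i}(\nabla\nu)$; this is how the computation is organised in~\cite{Ma02}.
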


The next lemma states the evolution formulae for the geometric quantities that we need in the computation of the second variation of the functional $\cF_m$.

\begin{lemma}\label{lem:Derivate}
Let $\vfi_t:M^n\to\R^{n+1}$ be a smooth family of immersions smoothly depending on $t\in(-\ep,\ep)$ and $\vfi=\vfi_0$. Let $X=\partial_t \vfi_t|_{t=0}$ and assume that $X$ is a {\em normal} vector field along $\vfi$. Then, we have
	\[
	\partial_t g_{ij}|_{t=0} = 2 \scal{\nu,X}  h_{ij} \,,
	\]
	\[
	\partial_t g^{ij}|_{t=0} = -2 \scal{\nu, X} g^{ik}g^{jl}  h_{kl} \,,
	\]
	\[
	\partial_t \nu|_{t=0} = -\nabla\scal{\nu, X} \,,
	\]
	\[
	\partial_t \Gamma^k_{ij}\bigr|_{t=0} = \nabla \AAA * \scal{\nu , X} + \AAA * \nabla \scal{\nu, X} \,,
	\]
\begin{equation}
\partial_th_{ij}|_{t=0}=-\nabla^2_{ij}\scal{\nu, X}+\scal{\nu, X} h^2_{ij}\,,\label{eq:DerivataB}
\end{equation}
\begin{equation}
\partial_t \HHH |_{t=0}=-\Delta\scal{\nu, X} - \scal{\nu, X} |\AAA|^2\,,\label{eq:DerivataH}
\end{equation}
\begin{equation}
\partial_t \De^m f |_{t=0} - \De^m \partial_t f  |_{t=0} = \cp_{2m} (f_0, \AAA, \scal{\nu, X})\,,\label{eq:DerivataLapl}
\end{equation}
for any smooth function $f \in C^\infty(M\times (-\ep,\ep))$ and $m\in \N$ with $m\ge 1$, where $f_0=f(\cdot,0)$ and
\begin{equation}
\partial_t \cq^{2m+1}(\nabla\nu, \AAA)|_{t=0} = \cq^{2m+3}(\scal{\nu,X}, \nabla\nu, \AAA) + \scal{\nu,X} \cq^{2m+2}(\nabla\nu, \AAA)\,.\label{eq:DerivataQ}
\end{equation}
\end{lemma}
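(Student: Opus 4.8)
The plan is to compute each evolution formula by differentiating the standard local-coordinate expressions at $t=0$, using only the Gauss--Weingarten relations \eqref{gwein} and the hypothesis that $X$ is normal, i.e. $X=\scal{\nu,X}\nu$ along $\vfi$. First I would record the basic first-variation identity $\partial_t\partial_i\vfi = \partial_i X = \partial_i(\scal{\nu,X})\nu + \scal{\nu,X}\,\partial_i\nu$, and then, using the second Gauss--Weingarten relation $\partial_i\nu = h_{ij}g^{jk}\partial_k\vfi$, rewrite this as $\partial_t\partial_i\vfi = \nabla_i(\scal{\nu,X})\nu + \scal{\nu,X}h_{ij}g^{jk}\partial_k\vfi$ (note $\partial_i(\scal{\nu,X})=\nabla_i(\scal{\nu,X})$ since $\scal{\nu,X}$ is a scalar). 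From here the formula for $\partial_t g_{ij}$ follows by differentiating $g_{ij}=\scal{\partial_i\vfi,\partial_j\vfi}$ and using $\scal{\nu,\partial_k\vfi}=0$; the formula for $\partial_t g^{ij}$ then comes from $\partial_t g^{ij} = -g^{ik}g^{jl}\partial_t g_{kl}$. The formula for $\partial_t\nu$ follows because $\nu$ stays unit (so $\partial_t\nu$ is tangential) and $0=\partial_t\scal{\nu,\partial_i\vfi} = \scal{\partial_t\nu,\partial_i\vfi} + \scal{\nu,\partial_i X}$, whence $\scal{\partial_t\nu,\partial_i\vfi} = -\nabla_i\scal{\nu,X}$, i.e. $\partial_t\nu = -\nabla\scal{\nu,X}$.

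Next I would differentiate the Christoffel symbol formula; since $\partial_t\Gamma^k_{ij}$ is a tensor, it suffices to use $\partial_t\Gamma^k_{ij} = \tfrac12 g^{kl}(\nabla_i\partial_t g_{jl} + \nabla_j\partial_t g_{il} - \nabla_l\partial_t g_{ij})$ and substitute $\partial_t g_{ij} = 2\scal{\nu,X}h_{ij}$, which produces exactly terms of the schematic form $\nabla\AAA*\scal{\nu,X} + \AAA*\nabla\scal{\nu,X}$. For \eqref{eq:DerivataB} I would differentiate $h_{ij} = -\scal{\nu,\partial^2_{ij}\vfi}$, using $\partial_t\nu = -\nabla\scal{\nu,X}$, the expression for $\partial_t\partial_i\vfi$ above, and again the Gauss--Weingarten relations to convert coordinate second derivatives of $\vfi$ into covariant quantities; the term $\scal{\nu,X}h^2_{ij}$ (with $h^2_{ij}=h_{ik}g^{kl}h_{lj}$) arises from $\scal{\nabla\scal{\nu,X}\,,\,\partial^2_{ij}\vfi}$ combined with $\partial^2_{ij}\vfi = \Gamma^k_{ij}\partial_k\vfi - h_{ij}\nu$, after one carefully collects the covariant-derivative terms into $-\nabla^2_{ij}\scal{\nu,X}$. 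Then \eqref{eq:DerivataH} follows by tracing \eqref{eq:DerivataB}: $\partial_t\HHH = \partial_t(g^{ij}h_{ij}) = (\partial_t g^{ij})h_{ij} + g^{ij}\partial_t h_{ij}$, and the two contributions $-2\scal{\nu,X}|\AAA|^2$ and $+\scal{\nu,X}|\AAA|^2$ from $g^{ij}h^2_{ij}=|\AAA|^2$ combine to give $-\Delta\scal{\nu,X} - \scal{\nu,X}|\AAA|^2$.

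For the commutator formulas \eqref{eq:DerivataLapl} and \eqref{eq:DerivataQ} I would argue by induction on the number of derivatives, using the base case $\partial_t\nabla_i\nabla_j f - \nabla_i\nabla_j\partial_t f = -(\partial_t\Gamma^k_{ij})\nabla_k f = \cp_2(f,\AAA,\scal{\nu,X})$, which follows directly from the formula for $\partial_t\Gamma^k_{ij}$. Iterating, each time one commutes $\partial_t$ past a covariant derivative $\nabla$ one picks up a factor $\partial_t\Gamma$, contributing a term where the total derivative count (counting each tensor slot as in the definition of $\cp_s$) increases by exactly $2$; keeping track of the bookkeeping and using $\RR=\AAA*\AAA$ for the interchanges in \eqref{ichange} when reorganizing indices gives \eqref{eq:DerivataLapl} with the stated degree $2m$. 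Formula \eqref{eq:DerivataQ} is of the same nature: since $\cq^{2m+1}(\nabla\nu,\AAA)$ is built by contracting covariant derivatives of $\nabla\nu$ and of $\AAA$, differentiating in $t$ and distributing $\partial_t$ over the product uses the already-established $\partial_t\nu$, $\partial_t h_{ij}$, $\partial_t g^{ij}$, $\partial_t\Gamma^k_{ij}$; each such substitution replaces one factor by something involving $\scal{\nu,X}$ and one extra derivative compared with its weight, which accounts precisely for the shift from weight $2m+1$ to weight $2m+3$ in the terms carrying two extra derivatives of $\scal{\nu,X}$, and weight $2m+2$ in the terms where $\scal{\nu,X}$ appears undifferentiated as an overall factor. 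The main obstacle is purely combinatorial rather than conceptual: one must be disciplined about the weight-counting conventions for $\cp_s$ and $\cq^s$ (recall every tensor slot of $\nabla\nu$ already carries a built-in derivative, so $\nabla\nu$ has weight $2$ and $\AAA$ has weight $1$) so that each $\partial_t$-substitution is checked to raise the weight by the correct amount, and one must verify that no term of lower weight or with the wrong structure survives; once the conventions are fixed the identities are routine. I expect this bookkeeping for \eqref{eq:DerivataLapl} and \eqref{eq:DerivataQ} to be the most delicate part of the argument.
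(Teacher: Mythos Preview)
Your plan matches the paper's proof: direct computation from the Gauss--Weingarten relations for the first six formulae (the paper simply cites \cite[p.~150]{Ma02} for the first four and computes $\partial_t h_{ij}$, $\partial_t\HHH$ explicitly as you describe), and induction with derivative--counting for \eqref{eq:DerivataLapl} and \eqref{eq:DerivataQ} (for the latter the paper invokes \cite[Proposition~3.6, Lemma~3.5]{Ma02} rather than redoing the bookkeeping from scratch).

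Two small corrections to your sketch. First, in \eqref{eq:DerivataB} the term $\scal{\nu,X}h^2_{ij}$ does \emph{not} arise from $\scal{\nabla\scal{\nu,X},\partial^2_{ij}\vfi}$: since $\nabla\scal{\nu,X}$ is tangential, that pairing equals $\Gamma^k_{ij}\partial_k\scal{\nu,X}$, which is precisely the correction converting $-\partial^2_{ij}\scal{\nu,X}$ into $-\nabla^2_{ij}\scal{\nu,X}$. The $h^2_{ij}$ contribution instead comes from $-\scal{\nu,X}\scal{\nu,\partial^2_{ij}\nu}$, using $\partial_j\nu=h_{jl}g^{lk}\partial_k\vfi$ once more to get $-\scal{\nu,X}h_{jl}g^{lk}\scal{\nu,\partial^2_{ik}\vfi}=\scal{\nu,X}h_{jl}g^{lk}h_{ik}$. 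Second, in the paper's convention for $\cq^s$ every factor $\nabla^j T$ contributes weight $j{+}1$ regardless of $T$, so bare $\nabla\nu$ and bare $\AAA$ each have weight $1$ (not $2$ and $1$); your conclusion about the shift $2m{+}1\to 2m{+}3$ is nonetheless correct, since substituting the leading term $\nabla^{i+2}\scal{\nu,X}$ (weight $i{+}3$) for a factor $\nabla^i\nabla\nu$ or $\nabla^i\AAA$ (weight $i{+}1$) raises the total by $2$.
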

\begin{proof}
The first four formulae are computed explicitly at page~150 of~\cite{Ma02}.

By means of the Gauss--Weingarten relations~\eqref{gwein}, setting $X=\beta\nu$, hence $\scal{\nu, X}=\beta$, we compute
\begin{align*}
\partial_th_{ij}|_{t=0}=&\,-\partial_t\langle\nu,\partial^2_{ij}\varphi_t\rangle|_{t=0}\\
=&\,-\langle\nu,\partial^2_{ij}(\beta \nu)\rangle+\langle\nabla\beta,\partial^2_{ij}\varphi\rangle\\
\,=&\,-\partial^2_{ij}\beta-\beta\langle\nu,\partial_i(h_{jl}g^{lk}\partial_k\varphi)\rangle
+\langle\partial_l\beta g^{ls}\partial_s\varphi,\Gamma_{ij}^k\partial_k\varphi-h_{ij}\nu\rangle\\
\,=&\,-\partial^2_{ij}\beta-\beta\langle\nu,h_{jl}g^{lk}\partial^2_{ik}\varphi\rangle
+\partial_k\beta\Gamma_{ij}^k\\
\,=&\,-\nabla^2_{ij}\beta+\beta h_{ik}g^{kl}h_{lj}
\end{align*}
that is, $\partial_th_{ij}|_{t=0}=-\nabla^2_{ij}\scal{\nu, X}+\scal{\nu, X} h^2_{ij}$, hence it follows
\begin{equation}
\partial_t \HHH |_{t=0}=\partial_t (g^{ij} h_{ij})|_{t=0} =-2 \scal{\nu, X} |\AAA|^2 - \Delta\scal{\nu, X} +\scal{\nu, X}|\AAA|^2=-\Delta\scal{\nu, X} - \scal{\nu, X} |\AAA|^2.
\end{equation}

We now deal with equation~\eqref{eq:DerivataLapl} arguing by induction on $m\ge 1$. Using the previous evolution formulae, for $m=1$ we compute
	\[
	\begin{split}
	\partial_t \De f |_{t=0}
	&= \partial_t (g^{ij}(\partial^2_{ij} f - \Gamma^k_{ij} \partial_k f ))  |_{t=0}\\
	&= - 2 \scal{\nu, X} g^{ik}g^{jl}  h_{kl} \nabla^2_{ij} f_0
	+ \De \partial_t|_{t=0} f - g^{ij} (\nabla \AAA * \scal{\nu , X} + \AAA * \nabla \scal{\nu, X}) \partial_k f_0,
	\end{split}
	\]
	and the claim follows. Now for $m+1\ge1$, by induction we get
	\[
	\begin{split}
	\partial_t\De^{m+1} f  |_{t=0}
	&= \De (\partial_t \De^{m} f)|_{t=0}  + \cp_2(\De^m f_0, \AAA , \scal{\nu, X} ) \\
	&= \De \left( \De^m \partial_t f|_{t=0} + \cp_{2m} (f_0, \AAA, \scal{\nu, X}) \right) + \cp_{2m+2}( f_0, \AAA , \scal{\nu, X} ).
	\end{split}
	\]
	
Finally, in order to show equation~\eqref{eq:DerivataQ}, we need to differentiate a generic term of the form
\[
\circledast_{k=1}^N \nabla^{i_k}\nabla\nu \circledast_{l=1}^M \nabla^{j_l} \AAA,
\]
with $\sum_{k=1}^N (i_k+1) + \sum_{l=1}^M (j_l+1) = 2m+1$.\\
For any component $\nu^\alpha$ of $\nu$ we can apply~\cite[Proposition~3.6]{Ma02} in order to get
\[
\partial_t (\nabla^{i_k}\nabla\nu^\alpha)|_{t=0} = - \nabla^{i_k+1} \nabla^\alpha \scal{\nu,X} + \cp_{i_k} (\scal{\nu,X}, \nabla \nu, \AAA),
\]
where $\nabla^\alpha \scal{\nu,X}$ denotes the $\alpha$--th component in $\R^{n+1}$ of the gradient $\nabla\scal{\nu, X}$.
Also, by~\cite[Lemma~3.5]{Ma02} and formula~\eqref{eq:DerivataB}, we have
\[
\begin{split}
\partial_t (\nabla^{j_l} \AAA)|_{t=0}
&= \nabla^{j_l} ( -\nabla^2 \scal{\nu, X} + \scal{\nu, X} \AAA *\AAA ) + \cp_{j_l} (\AAA, \AAA, \scal{\nu, X} ) \\
&= -\nabla^{j_l+2}\scal{\nu, X} + \cp_{j_l} (\AAA, \AAA, \scal{\nu, X} ).
\end{split}
\]
Therefore, using these formulae and the ones above for the derivative of the metric $g_{ij}$ and its inverse $g^{ij}$, formula~\eqref{eq:DerivataQ} follows.
\end{proof}

We can now compute the second variation of $\cF_m$.

\begin{thm}\label{thm:SecondVariation}
Let $\vfi_t:M^n\to\R^{n+1}$ be a smooth family of immersions smoothly depending on $t\in(-\ep,\ep)$. Denote $\vfi=\vfi_0$ and assume that $\vfi$ is a critical point for $\cF_m$, i.e., $\EE_m(\vfi)=0$. Let $X=\partial_t \vfi_t|_{t=0}$ and assume that $X$ is normal along $\vfi$. Then
\[
\frac{d^2}{dt^2} \cF_m (\vfi_t)\bigg|_{t=0} = \int_M \left( 2(-1)^{m+1} \De^{m+1} \scal{\nu, X} + \Omega(\scal{\nu,X}) \right) \scal{\nu,X} \, d\mu,
\]
where $\Omega(\scal{\nu,X})$ is linear in $\scal{\nu,X}$ and depends on its covariant derivatives of order $2m$ at most.
\end{thm}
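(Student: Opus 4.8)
The plan is to obtain the second variation by differentiating the first variation formula of Theorem~\ref{thm:FirstVariation} once more in $t$, and then extracting the top-order term with the help of the evolution formulae collected in Lemma~\ref{lem:Derivate}.

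First I would differentiate $\frac{d}{dt}\cF_m(\vfi_t)=\int_M\EE_m(\vfi_t)\scal{\nu_t,X_t}\,d\mu_t$, where $X_t=\partial_t\vfi_t$ and $\nu_t,\mu_t$ denote the unit normal and the volume measure of $\vfi_t$. By the product rule,
\[
\frac{d^2}{dt^2}\cF_m(\vfi_t)\Big|_{t=0}=\int_M\Big(\partial_t\EE_m(\vfi_t)\big|_{t=0}\Big)\scal{\nu,X}\,d\mu+\int_M\EE_m(\vfi)\,\partial_t\big(\scal{\nu_t,X_t}\,d\mu_t\big)\big|_{t=0}.
\]
Since $\vfi=\vfi_0$ is a critical point we have $\EE_m(\vfi)=0$, so the second integral vanishes and the statement reduces to computing the linearization $\partial_t\EE_m(\vfi_t)|_{t=0}$ of $\EE_m$ along the variation $X$. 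Although $X_t$ need not be normal for $t\neq0$, only its value at $t=0$ — which is normal by hypothesis — enters this computation, so the formulae of Lemma~\ref{lem:Derivate} are applicable.

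Next I would compute $\partial_t\EE_m(\vfi_t)|_{t=0}$ term by term from the expression $\EE_m=2(-1)^m\De^m\HHH+\cq^{2m+1}(\nabla\nu,\AAA)+\HHH$, keeping in mind that $\De,\nabla,\AAA,\HHH$ all refer to $\vfi_t$ and hence vary with $t$. The summand $\HHH$ contributes $\partial_t\HHH|_{t=0}=-\Delta\scal{\nu,X}-\scal{\nu,X}|\AAA|^2$ by \eqref{eq:DerivataH}; the summand $\cq^{2m+1}(\nabla\nu,\AAA)$ contributes $\cq^{2m+3}(\scal{\nu,X},\nabla\nu,\AAA)+\scal{\nu,X}\,\cq^{2m+2}(\nabla\nu,\AAA)$ by \eqref{eq:DerivataQ}; and for the leading summand, applying \eqref{eq:DerivataLapl} with $f=\HHH$ (viewed as a smooth function on $M\times(-\ep,\ep)$) and then \eqref{eq:DerivataH},
\begin{align*}
2(-1)^m\partial_t\big(\De^m\HHH\big)\big|_{t=0}
&=2(-1)^m\De^m\big(-\Delta\scal{\nu,X}-\scal{\nu,X}|\AAA|^2\big)+\cp_{2m}(\HHH,\AAA,\scal{\nu,X})\\
&=2(-1)^{m+1}\De^{m+1}\scal{\nu,X}-2(-1)^m\De^m\big(\scal{\nu,X}|\AAA|^2\big)+\cp_{2m}(\HHH,\AAA,\scal{\nu,X}).
\end{align*}

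Finally I would isolate the top-order term $2(-1)^{m+1}\De^{m+1}\scal{\nu,X}$ and collect all remaining contributions into a single expression $\Omega(\scal{\nu,X})$; substituting $\partial_t\EE_m(\vfi_t)|_{t=0}=2(-1)^{m+1}\De^{m+1}\scal{\nu,X}+\Omega(\scal{\nu,X})$ into the integral above yields the claimed formula. Linearity of $\Omega$ in $\scal{\nu,X}$ is automatic, since $X\mapsto\partial_t\EE_m(\vfi_t)|_{t=0}$ is the Gateaux derivative of $\EE_m$ along the normal direction $X$. The one point requiring care — the main (and rather minor) obstacle — is to verify that every term collected into $\Omega$ involves covariant derivatives of $\scal{\nu,X}$ of order at most $2m$: this is immediate for the contributions coming from the $\HHH$-summand, from $\De^m(\scal{\nu,X}|\AAA|^2)$ and from $\cp_{2m}(\HHH,\AAA,\scal{\nu,X})$, while for the term $\cq^{2m+3}(\scal{\nu,X},\nabla\nu,\AAA)$ one uses that in each of its summands the factors $\nabla\nu$ and $\AAA$ each consume at least one unit of the total weight $2m+3$, leaving at most $2m$ derivatives on $\scal{\nu,X}$. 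Everything else is a routine substitution into Lemma~\ref{lem:Derivate}.
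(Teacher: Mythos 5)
Your proposal is correct and follows essentially the same line as the paper's proof: differentiate the first variation formula, discard the term multiplied by $\EE_m(\vfi)=0$, compute $\partial_t\EE_m(\vfi_t)|_{t=0}$ term by term via the evolution formulae \eqref{eq:DerivataLapl}, \eqref{eq:DerivataH}, \eqref{eq:DerivataQ}, and isolate the leading term $2(-1)^{m+1}\De^{m+1}\scal{\nu,X}$. Your closing observation on why $\cq^{2m+3}(\scal{\nu,X},\nabla\nu,\AAA)$ carries at most $2m$ derivatives of $\scal{\nu,X}$ --- namely that the $\nabla\nu$ and $\AAA$ arguments each contribute at least one unit of weight --- is precisely the remark made in the paper (stated there as $N,M,P\ge 1$).
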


\begin{proof}
By Theorem~\ref{thm:FirstVariation} we have
\[
\begin{split}
    \frac{d^2}{dt^2} \cF_m (\vfi_t)\bigg|_{t=0}
    &= \frac{d}{dt} \int_M \EE_m(\vfi_t)  \scal{\nu, \partial_t\vfi_t} \,d\mu_t \bigg|_{t=0} 
    = \int_M \left[\frac{\partial}{\partial t}  \EE_m(\vfi_t) \right]\bigg|_{t=0}  \scal{\nu, X} \,d\mu,
\end{split}
\]
as $\EE_m(\vfi)=0$. Using the explicit expression for $\EE_m(\vfi_t)$  (Theorem~\ref{thm:FirstVariation}), applying formula~\eqref{eq:DerivataLapl} with $f=\HHH$ and equations~\eqref{eq:DerivataH},~\eqref{eq:DerivataQ}, we get
\begin{align*}
    \frac{d}{dt}   \EE_m(\vfi_t)\bigg|_{t=0}
    &= 2(-1)^{m+1}\Delta^m (  \De\scal{\nu, X} + \scal{\nu, X} |\AAA|^2) + \cp_{2m}(\HHH,\AAA,\scal{\nu,X})\\
    &\indent  + \cq^{2m+3}(\scal{\nu,X}, \nabla\nu, \AAA)  + \scal{\nu,X} \cq^{2m+2}(\nabla\nu, \AAA) -  (  \De\scal{\nu, X} + \scal{\nu, X} |\AAA|^2) \\
    &= 2(-1)^{m+1} \De^{m+1}\scal{\nu, X} + 2(-1)^{m+1} \Delta^m(\scal{\nu,X}|A|^2)\\
    &\indent  + \cq^{2m+3}(\scal{\nu,X}, \nabla\nu, \AAA)  + \scal{\nu,X} \cq^{2m+2}(\nabla\nu, \AAA) -  (  \De\scal{\nu, X} + \scal{\nu, X} |\AAA|^2) .
\end{align*}
Hence, the thesis follows by observing that a generic monomial in $\cq^{2m+3}(\scal{\nu,X}, \nabla\nu, \AAA)$ is of the form
\[
\circledast_{k=1}^N \nabla^{i_k} \scal{\nu, X} 
\circledast_{l=1}^M \nabla^{j_l}  \nabla\nu
\circledast_{s=1}^P \nabla^{r_s}  \AAA,
\]
with
\[
\sum_{k=1}^N (i_k+1) +\sum_{l=1}^M (j_l+1) +\sum_{s=1}^P (r_s+1) = 2m+3,
\]
and $N,M,P\ge1$ and then $i_k\le 2m$ for any $k$.
\end{proof}

It follows that, by polarization, we can define the bilinear form
\begin{equation}\label{eq:SecondVariation}
    \begin{split}
    (\delta^2 \cF_{m})_{\vfi}(f_1,f_2) & \coloneqq \frac{d}{ds} \frac{d}{dt}\cF_m(\vfi + s f_1 \nu + t f_2 \nu ) \bigg|_{t=0} \bigg|_{s=0} \\ 
    & \:= \int_M \left( 2(-1)^{m+1} \De^{m+1} f_1+ \Omega(f_1) \right) f_2 \, d\mu\,,
\end{split}
\end{equation}
for any pair of smooth functions $f_1,f_2:M\to\R$ and $\Om$ is as in Theorem~\ref{thm:SecondVariation}.

\section{Analysis of the second variation}

Suppose that $\vfi:M\to\R^{n+1}$ is a smooth critical point of $\cF_m$, i.e., $\EE_m(\vfi)=0$. The formula for the second variation given above shows that $(\delta^2\cF_m)_\vfi (f_1,f_2)$ is well-defined for $f_1\in W^{2m+2,2}(M,g)$ and $f_2 \in L^2(\mu)$. This means that
\[
(\delta^2\cF_m)_\vfi (f,\cdot) \in L^2(\mu)^\star,
\]
for any $f \in W^{2m+2,2}(M,g)$ and it is well-defined the map
\[
W^{2m+2,2}(M,g) \ni f \mapsto (\delta^2\cF_m)_\vfi (f,\cdot) \in L^2(\mu)^\star.
\]
We are going to exploit the theory of Fredholm operators between Banach spaces. For definitions and results on the subject we refer the reader to~\cite[Section~19.1]{HormanderIII}. We recall that if $T:V_1\to V_2$ is a Fredholm operator between Banach spaces, its index is defined to be the integer number
\[
{\rm index } \,T\coloneqq \dim \ker T - \dim \, {\rm coker }\,T.
\]
where $\dim$ denotes the dimension of a finite dimensional vector space.

\begin{prop}\label{prop:Fredholm}
Let $\vfi:M\to\R^{n+1}$ be a smooth critical point of $\cF_m$, i.e., $\EE_m(\vfi)=0$. Then the second variation functional
\[
(\delta^2\cF_m)_\vfi : W^{2m+2,2}(M,g) \to L^2(\mu)^\star
\]
is a Fredholm operator of index zero.
\end{prop}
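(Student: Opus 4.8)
The plan is to identify $(\delta^2\cF_m)_\vfi$, via the Riesz representation theorem, with a linear differential operator of order $2m+2$ on the closed manifold $M$, to observe that this operator is elliptic, and then to combine the standard Fredholm theory of elliptic operators on closed manifolds with the symmetry of the second variation to conclude that the index vanishes.

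First I would use the isometric isomorphism $L^2(\mu)\cong L^2(\mu)^\star$ furnished by the $L^2$ pairing. Since $\vfi$ is smooth, by Theorem~\ref{thm:SecondVariation} the coefficients appearing in $\Om$ are smooth functions on $M$ and $\Om$ differentiates its argument at most $2m$ times; hence, for every $f\in W^{2m+2,2}(M,g)$ the function
\[
L f\coloneqq 2(-1)^{m+1}\De^{m+1} f+\Om(f)
\]
lies in $L^2(\mu)$ and satisfies $(\delta^2\cF_m)_\vfi(f,h)=\int_M (Lf)\,h\,d\mu$ for all $h\in L^2(\mu)$. Thus $(\delta^2\cF_m)_\vfi$ is the composition of $L\colon W^{2m+2,2}(M,g)\to L^2(\mu)$ with the Riesz isomorphism, and it suffices to prove that $L$ is Fredholm of index zero. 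Now $L$ is a linear differential operator of order $2m+2$ with smooth coefficients, and it is elliptic: its principal symbol coincides with that of $2(-1)^{m+1}\De^{m+1}$, namely $\sigma(L)(x,\xi)=2(-1)^{m+1}\bigl(-|\xi|_g^2\bigr)^{m+1}=2\,|\xi|_g^{2m+2}$, which is invertible for every $\xi\neq 0$. By the standard theory of elliptic operators with smooth coefficients on closed Riemannian manifolds (see, e.g., \cite{HormanderIII}), $L$ is then a Fredholm operator $W^{2m+2,2}(M,g)\to L^2(\mu)$, and hence so is $(\delta^2\cF_m)_\vfi$.

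It remains to check that ${\rm index}\,L=0$. The bilinear form $(\delta^2\cF_m)_\vfi$ is symmetric: by~\eqref{eq:SecondVariation} it is the polarization of $f\mapsto\frac{d^2}{dt^2}\cF_m(\vfi+tf\nu)\big|_{t=0}$, equivalently $(\delta^2\cF_m)_\vfi(f_1,f_2)=\partial_s\partial_t\big|_{s=t=0}\cF_m\bigl(\vfi+(sf_1+tf_2)\nu\bigr)$, and the mixed second partial derivatives of the smooth function $(s,t)\mapsto\cF_m(\vfi+(sf_1+tf_2)\nu)$ commute. Hence $\int_M (Lf_1)f_2\,d\mu=\int_M f_1(Lf_2)\,d\mu$ for all $f_1,f_2\in C^\infty(M)$, i.e., $L$ is formally self-adjoint. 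For a formally self-adjoint elliptic operator on a closed manifold, elliptic regularity gives that the $L^2$--orthogonal complement of the (closed) range of $L$ equals $\ker L$ itself, so that $\dim\mathrm{coker}\,L=\dim\ker L$ and therefore ${\rm index}\,L=0$. Alternatively, one may deform $L$ to $2(-1)^{m+1}\De^{m+1}$ through the elliptic family $L_\tau=2(-1)^{m+1}\De^{m+1}+\tau\,\Om$, $\tau\in[0,1]$, all of which have the same principal symbol and hence are Fredholm with index constant in $\tau$, and compute ${\rm index}\bigl(2(-1)^{m+1}\De^{m+1}\bigr)=0$ directly, its kernel and cokernel being the constant functions.

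The argument is essentially routine; the only points deserving care are the bookkeeping that the lower--order term $\Om$ has smooth coefficients and order strictly below $2m+2$ — so that $L$ is genuinely elliptic of order $2m+2$ and $Lf\in L^2(\mu)$ — and the use of the symmetry of $(\delta^2\cF_m)_\vfi$, which is precisely what upgrades Fredholmness to index zero. I do not foresee any substantial obstacle.
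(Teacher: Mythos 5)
Your proof is correct, but it takes a genuinely different route from the paper's. You invoke the full Fredholm theory for elliptic operators on closed manifolds to get that $L$ is Fredholm in one stroke (correct principal symbol computation included), and then you obtain the index from formal self-adjointness of $L$, which you deduce from the symmetry of the second variation --- a clean observation not made explicitly in the paper. Your alternative homotopy argument $L_\tau = 2(-1)^{m+1}\De^{m+1}+\tau\Om$ combined with the direct computation that $\ker\De^{m+1}$ consists of constants is also valid. The paper, by contrast, avoids invoking the Fredholm property of a general elliptic operator on a closed manifold: it first observes that $\Om$ is a compact perturbation (so it suffices to treat $2(-1)^{m+1}\De^{m+1}$), and then proves by hand that $C\,\id + 2(-1)^{m+1}\De^{m+1}$ is invertible for large $C$ --- injectivity by integration by parts, surjectivity by a variational argument requiring a coercivity estimate on $W^{m+1,2}$ (this is where the commutation rule of Lemma~\ref{lem:CommutationRule} and the interpolation inequalities enter) followed by elliptic regularity --- and finally uses compactness of the inclusion $W^{2m+2,2}\hookrightarrow L^2$. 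The paper's argument is more self-contained and only uses the compact-perturbation corollary from H\"ormander; yours is shorter and cleaner if one is willing to cite the standard elliptic package, and it exposes the self-adjointness as the structural reason for the vanishing of the index. Both are sound.
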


In order to prove Proposition~\ref{prop:Fredholm} we need the following commutation rule.

\begin{lemma}\label{lem:CommutationRule}
Let $\vfi:M^n\to\R^{n+1}$ be a smooth immersion and let $T$ be a tensor defined on $M$. Assume $M$ is endowed with the pull-back metric $g$ induced by $\vfi$. Then
\[
\nabla\De^l T - \De^l \nabla T =  \cp_{2l-1}(\AAA,\AAA,T) ,
\]
for any $l\in \N$ with $l\ge 1$.
\end{lemma}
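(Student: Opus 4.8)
The plan is to prove the commutation rule $\nabla\De^l T - \De^l\nabla T = \cp_{2l-1}(\AAA,\AAA,T)$ by induction on $l\ge 1$, using the Gauss equation $\RR = \AAA * \AAA$ together with the standard interchange formulae~\eqref{ichange} for covariant derivatives.

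For the base case $l=1$, I would compute $\nabla\De T - \De\nabla T$ directly. Writing $\De T = g^{ij}\nabla_i\nabla_j T$, one has $\nabla_k\De T = g^{ij}\nabla_k\nabla_i\nabla_j T$, while $\De\nabla_k T = g^{ij}\nabla_i\nabla_j\nabla_k T$. The difference is a sum of commutators $[\nabla_k,\nabla_i]$ and $[\nabla_i,\nabla_j]$ applied after one further derivative, each of which by~\eqref{ichange} produces a term of the schematic form $\RR * \nabla T$ plus $(\nabla\RR) * T$; invoking $\RR = \AAA*\AAA$ and hence $\nabla\RR = \cp_1(\AAA,\AAA)$, every resulting term has exactly two factors of $\AAA$ and one factor of $T$, with a total of $2\cdot 1 - 1 = 1$ derivatives distributed among them, i.e. it lies in $\cp_{1}(\AAA,\AAA,T)$. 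This is exactly the asserted form.

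For the inductive step, assuming the statement for $l$, I would write $\De^{l+1}\nabla T = \De(\De^l\nabla T) = \De(\nabla\De^l T - \cp_{2l-1}(\AAA,\AAA,T))$. Applying the $l=1$ case to the tensor $\De^l T$ gives $\De\nabla\De^l T = \nabla\De^{l+1}T - \cp_1(\AAA,\AAA,\De^l T)$, and since $\De^l T$ carries $2l$ derivatives relative to $T$, the term $\cp_1(\AAA,\AAA,\De^l T)$ is of type $\cp_{2l+1}(\AAA,\AAA,T) = \cp_{2(l+1)-1}(\AAA,\AAA,T)$. It remains to observe that $\De\,\cp_{2l-1}(\AAA,\AAA,T)$ is again of type $\cp_{2l+1}(\AAA,\AAA,T)$: applying $\De = g^{ij}\nabla_i\nabla_j$ raises the total derivative count by $2$ and, by the Leibniz rule, the two new derivatives land on one of the three arguments $\AAA,\AAA,T$ while each argument still appears exactly once — this uses the defining structure~\eqref{eq:DefP} of $\cp_s$. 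Combining, $\De^{l+1}\nabla T = \nabla\De^{l+1}T - \cp_{2(l+1)-1}(\AAA,\AAA,T)$, completing the induction.

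The only mildly delicate point is bookkeeping the derivative count and verifying that all error terms genuinely fall into the class $\cp_{2l-1}(\AAA,\AAA,T)$ rather than a larger class — in particular checking that no term with three or more factors of $\AAA$, or with $T$ appearing with the wrong multiplicity, is produced; this is controlled by the fact that each commutation introduces exactly one Riemann factor $\RR = \AAA*\AAA$ (possibly differentiated, which via the second Bianchi-type identity $\nabla\RR = \cp_1(\AAA,\AAA)$ keeps exactly two $\AAA$'s) and leaves $T$ untouched as a single factor. There is no serious analytic obstacle here; it is a purely algebraic induction, and the main care is simply to keep the schematic $*$-notation consistent with the definition of $\cp_s$ given in~\eqref{eq:DefP}.
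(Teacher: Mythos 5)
Your proposal is correct and follows essentially the same approach as the paper: induction on $l$, with the base case $l=1$ obtained by commuting covariant derivatives and using $\RR = \AAA*\AAA$ (your commutator bookkeeping in index notation is equivalent to the paper's computation in a normal coordinate frame), and the inductive step obtained by applying the $l=1$ case and the induction hypothesis in tandem. The only cosmetic difference is that you expand $\De^{l+1}\nabla T$ rather than $\nabla\De^{l+1}T$, which reorganizes but does not change the argument.
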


\begin{proof}
As we need to prove a pointwise identity, we can take a local coordinate frame $E_1,...,E_n$ which is orthonormal at a given point $p$ (that is, $\scal{E_i,E_j}=\de_{ij}$) and $\nabla_i E_j = 0$ at $p$. In this way we can compute
\[
\begin{split}
    (\De \nabla T ) (E_k) 
    &= ( \nabla^2 (\nabla T)  (E_i,E_i) ) (E_k)\\
    &=( \nabla_i (\nabla_i \nabla T ) - \nabla_{\nabla_i E_i} \nabla T )(  E_k ) \\
    &= ( \nabla_i (\nabla_i \nabla T ) ) (E_k) \\
    &=  \nabla_i ( (\nabla_i \nabla T )  (E_k) ) - (\nabla_i \nabla T ) (\nabla_i E_k) \\
    & = \nabla_i (\nabla^2 T (E_i,E_k)) - \nabla^2 T (E_i, \nabla_i E_k ) \\
    & = \nabla_i (\nabla^2 T (E_i,E_k)).
\end{split}
\]
at the point $p$. On the other hand, using that for any tensor $S$ we have the commutation rule 
$$
(\nabla^2S)(E_j,E_l)= (\nabla^2S)(E_l,E_j) + \RR * S
$$
for any $j$ and $l$, we obtain
\[
\begin{split}
    (\nabla\De T)(E_k) 
    &= \nabla_k ( {\rm trace\,} \nabla^2 T )\\
    &= {\rm trace \,} \nabla_k \nabla^2 T \\
    &= (\nabla_k (\nabla^2 T ) )(E_i,E_i) \\
    &= (\nabla^3 T) (E_k,E_i,E_i) \\
    &= (\nabla^3 T) (E_i,E_k,E_i) + \RR * \nabla T \\
    &= (\nabla_i (\nabla^2T) ) (E_k,E_i) + \RR * \nabla T\\
    &= \nabla_i (\nabla^2T ( E_k,E_i) ) - (\nabla^2T)(\nabla_i E_k, E_i) - (\nabla^2T)( E_k, \nabla_i E_i)  + \RR * \nabla T\\
    &= \nabla_i ( \nabla^2T ( E_i,E_k) + \RR * T ) + \RR * \nabla T\\
    &= (\De \nabla T) (E_k) + \nabla (\RR*T) + \RR * \nabla T \\
    &= (\De \nabla T) (E_k) + \cp_{1}(\AAA,\AAA, T),
\end{split}
\]
where we used that $\RR= \AAA *\AAA$, by Gauss equations.
Hence, the thesis is proved for $l=1$. Letting now $l+1\ge 1$, by induction we obtain
\[
\begin{split}
    \nabla \De \De^l T 
    &= \De \nabla \De^l T + \cp_1 (\AAA, \AAA, \De^l T) 
    = \De ( \De^l \nabla T + \cp_{2l-1}(\AAA, \AAA, T) ) + \cp_{2l+1} (\AAA, \AAA, T),
\end{split}
\]
and the thesis follows.
\end{proof}

We are now ready to prove Proposition~\ref{prop:Fredholm}. A relevant property about Fredholm operators that we are going to use is the following. If  $T:V_1\to V_2$ is a Fredholm operator between Banach spaces and $K:V_1\to V_2$ is a compact operator, then $T+K$ is Fredholm and ${\rm index } (T+K)={\rm index } \, T$ (see~\cite[Corollary~19.1.8]{HormanderIII}).

\begin{proof}[Proof of Proposition~\ref{prop:Fredholm}]
For $f_1 \in W^{2m+2,2}(M,g)$ the functional $(\delta^2\cF_m)_\vfi (f_1,\cdot)$ is given by
\[
(\delta^2\cF_m)_\vfi (f_1,f_2) = \scal{ \cL(f_1) , f_2 }_{L^2(\mu)},
\]
where $\cL: W^{2m+2,2}(M,g) \to L^2(\mu)$ is
\[
\cL(f) = 2(-1)^{m+1}\De^{m+1} f + \Om (f),
\]
and $\Om$ is as in Theorem~\ref{thm:SecondVariation}, hence $\Om$ is a compact operator. Therefore 
$$
(\delta^2\cF_m)_\vfi :  W^{2m+2,2}(M,g) \to L^2(\mu)^\star
$$
is Fredholm of index zero if and only if the same holds for $\cL :  W^{2m+2,2}(M,g) \to L^2(\mu)$.\\
We then claim that the operator
\[
C\id + 2(-1)^{m+1}\De^{m+1}: W^{2m+2,2}(M,g) \to L^2(\mu)
\]
is invertible for $C>0$ sufficiently large, thus it is Fredholm of index zero. As the inclusion $\id:  W^{2m+2,2}(M,g) \to L^2(\mu) $ is compact, this eventually implies that $ 2(-1)^{m+1}\De^{m+1}: W^{2m+2,2}(M,g) \to L^2(\mu)$ is Fredholm of index zero.\\
The injectivity of the above operator immediately follows, suppose indeed that we have $Cf + 2(-1)^{m+1}\De^{m+1} f =0$, if $m=2k+1$, multiplying by $f$ and integrating, we get
\[
C \int_M f^2 \, d\mu = -2 \int_M f \De^{2(k+1)} f \, d\mu = - 2 \int_M (\De^{k+1} f)^2 \, d\mu,
\]
then $f=0$. If instead $m=2k$, multiplying by $f$ and integrating we get
\[
C \int_M f^2 \, d\mu = 2 \int_M f \De^{2k+1} f \, d\mu = -2 \int_M |\nabla\De^k f |^2 \, d\mu ,
\]
then $f=0$ as well.\\
About the surjectivity, given $h \in L^2(\mu)$ we aim at finding $f \in W^{2m+2,2}(M,g)$ such that $Cf + 2(-1)^{m+1}\De^{m+1} f =h$. We shall minimize the functional
\[
A_m: W^{m+1,2}(M,g)\to \R
\]
defined by
\[
A_m(f) \coloneqq 
\begin{cases}
\int_M \bigl[\frac{C}{2} f^2 + (\De^{k+1} f)^2 - fh\bigr] \, d\mu & \mbox{ if } m=2k+1,\\
\int_M \bigl[\frac{C}{2} f^2 + |\nabla\De^{k} f|^2 - fh\bigr] \, d\mu & \mbox{ if } m=2k.
\end{cases}
\]
We can prove that $A_m$ is coercive on $W^{m+1,2}(M,g)$, up to choosing $C>0$ sufficiently large (depending on $m$ and the geometry of $(M,g)$).\\
We first consider the case $m=2k+1$. Integrating by parts in the integral $\int_M (\De^{k+1}f)^2\, d\mu$, that is, using the divergence theorem and applying the commutation rule of Lemma~\ref{lem:CommutationRule} we get
\[
\begin{split}
    \int_M (\De^{k+1}f)^2\,d\mu
    = \int_M &- \scal{\nabla \De^k f , \nabla \De^{k+1} f }\,d\mu \\
    = \int_M &\bigl[- \scal{ \De^k  \nabla f , \De^{k+1} \nabla f } 
    + \nabla \De^k f * \cp_{2(k+1)-1}(\AAA,\AAA, f)\\
    &\,\,+ \nabla \De^{k+1} f * \cp_{2k-1}(\AAA,\AAA,f)\bigr]\,d\mu \\
    = \int_M &\bigl[- \scal{ \De^k  \nabla f , \De^{k+1} \nabla f }  + \cp_{4k+2} (\AAA,\AAA,f,f)\bigr]\,d\mu \\
    = \int_M &\bigl[(-1)^{k+1} \scal{ \nabla^{k+1} f, \De^{k+1} \nabla^{k+1} f }  + \cp_{4k+2} (\AAA,\AAA,f,f)\bigr]\,d\mu \\
    = \int_M &\bigl[|\nabla^{2k+2}f|^2  + \cp_{4k+2} (\AAA,\AAA,f,f)\bigr]\,d\mu \\
    = \int_M &\bigl[|\nabla^{m+1} f|^2 + \cp_{2m} (\AAA,\AAA,f,f)\bigr]\,d\mu.
\end{split}
\]
Moreover, by definition of $\cp_s$, we can apply the divergence theorem on the integral $\int_M \cp_{2m} (\AAA,\AAA,f,f)\,d\mu$ in the above expression so that in the polynomial there appear derivatives of $f$ of order $m$ at most.\\
We recall that for any covariant tensor $T$ there holds the general inequality (see~\cite[Chapter~3, Section~7.6]{Aubin})
\begin{equation}\label{eq:Interpolation}
\| \nabla^l T \|_{L^2(\mu)} \le C_{l,m} \| \nabla^{m+1} T \|_{L^2(\mu)}^{\frac{l}{m+1}} \| T \|_{L^2(\mu)}^{\frac{m+1-l}{m+1}} \le \ep \| \nabla^{m+1} T \|_{L^2(\mu)} + C_{l,m}(\ep) \| T \|_{L^2(\mu)},
\end{equation}
for any $l\le m$ and $\ep>0$. Therefore we can estimate
\[
\int_M |\cp_{2m} (\AAA,\AAA,f,f) | \, d\mu
\le C_m(\|\AAA\|_\infty^2)  \sum \int_M |\nabla^{l_1}f||\nabla^{l_2}f|\,d\mu,
\]
where $l_1,l_2\le m$ and then
\[
\int_M |\cp_{2m} (\AAA,\AAA,f,f) | \, d\mu
\le \ep C_m(\|\AAA\|_\infty^2) \| \nabla^{m+1} f \|^2_{L^2(\mu)} + C_m(\|\AAA\|_\infty^2, \ep ) \|f\|^2_{L^2(\mu)}.
\]
Therefore, taking $\ep C_m(\|\AAA\|_\infty^2)<1/2$ and $C=C(m,\|\AAA\|_\infty^2)$ sufficiently large, we estimate
\[
A_m(f) \ge \overline{C} \int_M\bigl[f^2 + |\nabla^{m+1}f|^2 -h^2\bigr] \, d\mu,
\]
that by inequality~\eqref{eq:Interpolation} implies that $A_m$ is coercive on $W^{m+1,2}(M,g)$. Analogously, one can prove the coercivity of $A_m$ also in the case $m=2k$.\\
It follows that there exists a function $F \in W^{m+1,2}(M,g)$ solving
\[
\int_M \bigl[CF f + 2 \De^{k+1} F \De^{k+1} f\bigr] \, d\mu = \int_M fh \, d\mu \qquad \forall\, f \in W^{m+1,2}(M,g)
\]
if $m=2k+1$, or
\[
\int_M \bigl[CF f + 2 \scal{\nabla \De^k F, \nabla\De^k f}\bigr] \, d\mu = \int_M fh \, d\mu \qquad \forall\, f \in W^{m+1,2}(M,g)
\]
if $m=2k$.
In any case, $F$ is a weak solution to an elliptic equation with constant coefficients and datum $h \in L^2(\mu)$ (in the sense of~\cite[Point~(d), Page~85]{Aubin}). Therefore, the standard regularity theory for distributional solutions applies (see~\cite[Theorem, Page~85]{Aubin}), hence $F$ belongs to $W^{2m+2,2}(M,g)$. Integrating by parts, we then get that $F$ solves $CF + 2(-1)^{m+1}\Delta^{m+1} F = h$, as required.
\end{proof}

\section{Convergence}

Suppose that $\vfi:M\to\R^{n+1}$ is a smooth critical point of $\cF_m$, that is, $\EE_m(\vfi)=0$. Then for $\rho_0>0$ suitably small, it is well-defined the functional $\cE_m:B_{\ro_0}(0)\subseteq W^{2m+2,2}(M,g)\to \R$ given by
\[
\cE_m(f) \coloneqq \cF_m(\vfi + f\nu).
\]
The advantage of the above definition is that the functional $\cE_m$ is now defined on an open set of a Banach space and we can then look at first and second variation functionals in the classical sense of functional analysis. More precisely, by Theorem~\ref{thm:FirstVariation} we have
\[
(\delta\cE_m)_{f_1}(f_2) \coloneqq \frac{d}{dt} \cE_m(f_1+tf_2) \Big|_{t=0}= \int_M \EE_m(\vfi+f_1\nu) \scal{\nu_1,  \nu} \,f_2 \, d\mu_1,
\]
where  $\nu$ (resp. $\nu_1$) is a unit normal vector along $\vfi$ (resp. $\vfi+f_1 \nu$) and $\mu_1$ is the volume measure induced by $\vfi+f_1\nu$. In this way we see that
\[
\delta \cE_m : B_{\ro_0}(0)\subseteq W^{2m+2,2}(M,g) \to L^2(\mu)^\star.
\]
Analogously, by Theorem~\ref{thm:SecondVariation} and formula~\eqref{eq:SecondVariation} the second variation of $\cE_m$ evaluated at $0 \in B_{\ro_0}(0)$ is given by
\[
(\delta^2\cE_m)_0 (f_1,f_2) 
= \int_M \left( 2(-1)^{m+1} \De^{m+1} f_1+ \Omega(f_1) \right) f_2 \, d\mu,
\]
for $\Omega$ as in Theorem~\ref{thm:SecondVariation}, so that
\[
(\delta^2\cE_m)_0 : W^{2m+2,2}(M,g) \to L^2(\mu)^\star,
\]
and it is a Fredholm operator of index zero by Proposition~\ref{prop:Fredholm}.

\medskip

In this setting we can apply the following abstract result stating sufficient conditions implying a \L ojasiewicz--Simon gradient inequality.

\begin{prop}[{\cite[Corollary~2.6]{Po20Loja}}]\label{prop:LojaAbstract}
Let $E:B_{\ro_0}(0)\subseteq V \to \R$ be an analytic map, where $V$ is a Banach space. Suppose that $0$ is a critical point for $E$, i.e., $\delta E_0 = 0$. Assume that there exists a Banach space $Z$ such that $V\hookrightarrow Z$, the first variation $\delta E : B_{\ro_0}(0)\to Z^\star$ is $Z^\star$--valued and analytic and the second variation $\delta^2 E_0 : V \to Z^\star$ evaluated at $0$ is $Z^\star$--valued and Fredholm of index zero.\\
Then there exist constants $C,\theta>0$ and $\alpha \in (0,1/2]$ such that
\[
|E(f)- E(0)|^{1-\alpha} \le C \| \delta E_f \|_{Z^\star},
\]
for every $f \in B_\theta(0) \subseteq V$.
\end{prop}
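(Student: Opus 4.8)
The plan is to perform a Lyapunov--Schmidt reduction of the gradient map $\delta E$ to a finite--dimensional problem, and then invoke the classical finite--dimensional \L ojasiewicz gradient inequality. Set $L\coloneqq\delta^2E_0:V\to Z^\star$; by hypothesis $L$ is Fredholm of index zero, and it is symmetric since it is the Hessian of a scalar functional. Let $N\coloneqq\ker L$, of finite dimension $d$, and let $R\coloneqq\mathrm{range}(L)$, a closed subspace of $Z^\star$ of codimension $d$. Using the symmetry of $L$ together with the duality pairing between $Z^\star$ and $V$ (through the inclusion $V\hookrightarrow Z$), one sees that $R\subseteq N^\circ$, where $N^\circ\coloneqq\{\zeta\in Z^\star:\zeta|_N=0\}$; since $N^\circ$ also has codimension $d$, this forces $R=N^\circ$. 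Fixing a basis $\nu_1,\dots,\nu_d$ of $N$ and functionals $\zeta_1,\dots,\zeta_d\in Z^\star$ with $\langle\zeta_i,\nu_j\rangle=\delta_{ij}$, I would work with the bounded projection $Pf\coloneqq\sum_i\langle\zeta_i,f\rangle\,\nu_i$ of $V$ onto $N$, whose kernel $W\coloneqq\bigcap_i\ker\zeta_i$ is a closed complement of $N$, and with the bounded projection $Q$ of $Z^\star$ onto $R$ with kernel $F\coloneqq\mathrm{span}(\zeta_1,\dots,\zeta_d)$. With these choices $L|_W:W\to R$ is a Banach space isomorphism.

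Next I would apply the analytic implicit function theorem to $\Phi(\xi,w)\coloneqq Q\,\delta E_{\xi+w}$ on a neighbourhood of $0$ in $N\times W$: it is analytic, $\Phi(0,0)=0$, and $\partial_w\Phi(0,0)=L|_W$ is invertible, so there is an analytic map $w^\star$ on a ball $B^N_{\ro_1}(0)$ with $w^\star(0)=0$ and $Q\,\delta E_{f^\star(\xi)}=0$, where $f^\star(\xi)\coloneqq\xi+w^\star(\xi)$; equivalently $\delta E_{f^\star(\xi)}\in F$. The reduced functional $\Gamma(\xi)\coloneqq E(f^\star(\xi))$ is then real--analytic on a finite--dimensional ball, with $\delta\Gamma_0=0$ because $\delta E_0=0$. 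Differentiating $\Gamma$ and using that $\delta E_{f^\star(\xi)}\in F$ annihilates $W$ (in particular $Dw^\star(\xi)(N)\subseteq W$), one obtains $\delta\Gamma_\xi(\eta)=\langle\delta E_{f^\star(\xi)},\eta\rangle$ for $\eta\in N$, so by equivalence of norms on the finite--dimensional space $F$, $\|\delta\Gamma_\xi\|\le C\,\|\delta E_{f^\star(\xi)}\|_{Z^\star}$; dually, since $\delta E_{f^\star(\xi)}$ annihilates $W$, the map $w\mapsto E(\xi+w)$ has a critical point at $w^\star(\xi)$, and a second--order Taylor estimate gives $|E(\xi+w)-E(f^\star(\xi))|\le C\,\|w-w^\star(\xi)\|_V^2$ near $0$.

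I would then apply the classical \L ojasiewicz inequality to the analytic function $\Gamma$ at its critical point $0$: there are $C',\theta'>0$ and $\alpha\in(0,1/2]$ (the exponent produced by \L ojasiewicz may always be decreased, so $\alpha\le1/2$ can be assumed) with $|\Gamma(\xi)-\Gamma(0)|^{1-\alpha}\le C'\,\|\delta\Gamma_\xi\|$. To transfer this back to $E$, write $f=\xi+w$ with $\xi=Pf$ and $w=(I-P)f$. Invertibility of $\partial_w(Q\,\delta E)$ near $0$, combined with $Q\,\delta E_{f^\star(\xi)}=0$, gives the coercivity bound $\|w-w^\star(\xi)\|_V\le C\,\|Q\,\delta E_f\|_{Z^\star}\le C\,\|\delta E_f\|_{Z^\star}$, and the local Lipschitz continuity of $\delta E$ then yields both $\|\delta E_{f^\star(\xi)}\|_{Z^\star}\le C\,\|\delta E_f\|_{Z^\star}$ and $|E(f)-E(f^\star(\xi))|\le C\,\|\delta E_f\|_{Z^\star}^2$. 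Finally, writing $E(f)-E(0)=\bigl(E(f)-E(f^\star(\xi))\bigr)+\bigl(\Gamma(\xi)-\Gamma(0)\bigr)$, using subadditivity of $t\mapsto t^{1-\alpha}$, the \L ojasiewicz inequality for $\Gamma$, the bound on $\delta\Gamma_\xi$, and the elementary inequality $\|\delta E_f\|_{Z^\star}^{2(1-\alpha)}\le\|\delta E_f\|_{Z^\star}$ (valid when $\|\delta E_f\|_{Z^\star}\le1$, as $2(1-\alpha)\ge1$), one arrives at $|E(f)-E(0)|^{1-\alpha}\le C\,\|\delta E_f\|_{Z^\star}$ on a sufficiently small ball $B_\theta(0)\subseteq V$.

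The main obstacle is the finite--dimensional reduction itself: one must set up the analytic implicit function theorem in Banach spaces and, more delicately, choose the projections $P$ and $Q$ compatibly --- this is exactly where the symmetry of the Hessian is used, through $R=N^\circ$ --- so that $\delta\Gamma_\xi$ is literally the $F$--component of $\delta E_{f^\star(\xi)}$ and so that $w^\star(\xi)$ is a genuine critical point of $E$ along $W$. The other delicate ingredient is the coercivity estimate $\|w-w^\star(\xi)\|_V\lesssim\|\delta E_f\|_{Z^\star}$ in the transfer step. Once these are in place, the remaining \L ojasiewicz content is purely classical and finite--dimensional.
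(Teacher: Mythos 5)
The paper does not prove this proposition; it is stated as a citation to \cite{Po20Loja} (Corollary~2.6 there), which in turn is derived from the abstract theory of Chill~\cite{Ch03}. Your Lyapunov--Schmidt reduction to the finite--dimensional \L ojasiewicz inequality is exactly the argument underlying those references, and it is correct: the identification $R=N^\circ$ via symmetry of the Hessian, the compatible choice of projections $P$ and $Q$ so that $\delta\Gamma_\xi$ is the $F$--component of $\delta E_{f^\star(\xi)}$, the coercivity estimate $\|w-w^\star(\xi)\|_V\lesssim\|Q\,\delta E_f\|_{Z^\star}$, and the final splitting using $2(1-\alpha)\ge1$ are all in order.
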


The above functional analytic result is a corollary of the useful theory developed in~\cite{Ch03} and it has been also proved in~\cite{Ru20} independently.

Applying Proposition~\ref{prop:LojaAbstract} to the functional $\cE_m$ we obtain the following corollary.

\begin{cor}\label{cor:Loja}
Let $\vfi:M\to\R^{n+1}$ be a smooth critical point of $\cF_m$, i.e., $\EE_m(\vfi)=0$. Let $\ro_0>0$ such that $\cE_m:B_{\ro_0}(0)\subseteq W^{2m+2,2}(M,g)\to \R$ is well-defined.\\
Then, there exist constants $C>0, \theta\in (0,\rho_0]$ and $\alpha \in (0,1/2]$ such that
\[
|\cF_m(\vfi + f\nu)- \cF_m(\vfi)|^{1-\alpha} \le C \| (\delta\cE_m)_f \|_{L^2(\mu)^\star},
\]
for every $f \in B_\theta(0) \subseteq W^{2m+2,2}(M,g)$.
\end{cor}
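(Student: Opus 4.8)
The plan is to obtain Corollary~\ref{cor:Loja} as a direct application of the abstract Proposition~\ref{prop:LojaAbstract}, taken with $E=\cE_m$, $V=W^{2m+2,2}(M,g)$ and $Z=L^2(\mu)$; then $Z^\star=L^2(\mu)^\star$, the inclusion $V\hookrightarrow Z$ is continuous (indeed compact), and the conclusion of Proposition~\ref{prop:LojaAbstract} reads verbatim as the asserted inequality once one recalls $\cE_m(f)=\cF_m(\vfi+f\nu)$, $\cE_m(0)=\cF_m(\vfi)$ and that $(\delta\cE_m)_f$ is the $L^2(\mu)^\star$-valued functional displayed right before the corollary. One then sets $\theta=\min\{\theta_0,\rho_0\}$, with $\theta_0$ the radius furnished by Proposition~\ref{prop:LojaAbstract} (should the analyticity discussed below force one to shrink $\rho_0$ first, this only strengthens the statement, since the corollary asserts the existence of \emph{some} $\theta\in(0,\rho_0]$).

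The first three hypotheses of Proposition~\ref{prop:LojaAbstract} are immediate. The inclusion $V\hookrightarrow Z$ has just been noted. The point $0$ is a critical point of $\cE_m$: by the first variation formula recalled above, $(\delta\cE_m)_0(f_2)=\int_M\EE_m(\vfi)\,\scal{\nu,\nu}\,f_2\,d\mu=0$, because $\EE_m(\vfi)=0$ by assumption. And $(\delta^2\cE_m)_0:V\to Z^\star$ coincides with $(\delta^2\cF_m)_\vfi$ by the computation of the second variation carried out before the corollary, hence it is a Fredholm operator of index zero by Proposition~\ref{prop:Fredholm}. The only remaining, and decisive, point is the analyticity of $\cE_m:B_{\rho_0}(0)\to\R$ and of $\delta\cE_m:B_{\rho_0}(0)\to L^2(\mu)^\star$.

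For that I would argue as follows. The assignment $f\mapsto\vfi+f\nu$ is affine, and the standing hypothesis $m>\lfloor n/2\rfloor$ gives $2m+2>n/2+1$, so $W^{2m+2,2}(M,g)\hookrightarrow C^1(M)$ and, for $\rho_0$ small, every $\vfi+f\nu$ with $f\in B_{\rho_0}(0)$ is still an immersion whose induced metric $g_f$ stays uniformly positive definite. Fixing a finite atlas with a subordinate partition of unity, one expresses in local coordinates all the relevant quantities --- the components $(g_f)_{ij}$ and $(g_f)^{ij}$, the density $\sqrt{\det g_f}$, the Christoffel symbols of $g_f$, the unit normal $\nu_f$ (a normalized generalized cross product of the coordinate tangent fields of $\vfi+f\nu$), all iterated covariant derivatives $\nabla^j\nu_f$, the second fundamental form, the mean curvature, and $\EE_m(\vfi+f\nu)$ --- as being built from $f$ and finitely many of its partial derivatives by the operations of addition, multiplication, composition with fixed polynomials, and composition with the real-analytic functions $t\mapsto 1/t$ and $t\mapsto\sqrt t$, the last two applied to quantities confined to a compact subinterval of $(0,\infty)$ for $f\in B_{\rho_0}(0)$. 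Each of these operations is analytic between the pertinent Sobolev spaces: multiplication is bounded bilinear, hence analytic, and $W^{k,2}(M)$ is a Banach algebra for $k>n/2$ --- which is available with $k=m+1$ since $m>\lfloor n/2\rfloor$ --- while superposition with a real-analytic function is analytic on such algebras; using in addition the inclusions $W^{2m+2,2}\hookrightarrow W^{j,2}$ to absorb factors of lower order, and remembering that a coordinate derivative lowers the Sobolev order by one, one checks that $f\mapsto 1+|\nabla^m\nu_f|^2$ is analytic into $W^{m+1,2}(M)\subseteq L^1(\mu)$, that the density of $\mu_f$ relative to $\mu$ depends analytically on $f$, hence that $\cE_m(f)=\int_M(1+|\nabla^m\nu_f|^2)\,d\mu_f$ is analytic and real-valued on $B_{\rho_0}(0)$; and likewise that $f\mapsto\EE_m(\vfi+f\nu)$ and $f\mapsto\scal{\nu_f,\nu}$ are analytic into $L^2(\mu)$ and into a Sobolev algebra respectively, so that the functional $f_2\mapsto\int_M\EE_m(\vfi+f\nu)\,\scal{\nu_f,\nu}\,f_2\,d\mu_f$ depends analytically on $f$ as an element of $L^2(\mu)^\star$, which is $(\delta\cE_m)_f$.

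The main obstacle is precisely this analyticity step. It requires a careful accounting of Sobolev exponents along the composition, so that the target space of each elementary operation embeds into the source space demanded by the next --- and it is exactly here that the bound $m>\lfloor n/2\rfloor$ is used, to keep all exponents in the range where the Banach-algebra and superposition estimates apply --- and it requires verifying that the Nemytskii operators attached to $t\mapsto 1/t$ and $t\mapsto\sqrt t$ are \emph{genuinely analytic}, not merely smooth, as maps between Banach spaces; for the latter I would appeal to the standard calculus of analytic maps between Banach spaces (stability of analyticity under sums, products and compositions, together with the convergent power-series expansion of a superposition operator obtained from the Taylor expansion of its symbol and the algebra property). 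With this in hand, all four hypotheses of Proposition~\ref{prop:LojaAbstract} are satisfied, and the corollary follows with the constants $C$, $\theta$ and $\alpha$ it provides.
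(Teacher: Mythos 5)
Your proposal follows essentially the same route as the paper: it invokes Proposition~\ref{prop:LojaAbstract} with $V=W^{2m+2,2}(M,g)$ and $Z=L^2(\mu)$, reduces everything to verifying its hypotheses, notes that the Fredholm property is Proposition~\ref{prop:Fredholm} and that $0$ is critical because $\EE_m(\vfi)=0$, and identifies analyticity of $\cE_m$ and $\delta\cE_m$ as the substantive point to check. Your treatment of the analyticity step is more detailed than the paper's (which simply asserts that $\psi\mapsto\nu_\psi$ and the induced metric are analytic and that the integrand is built from analytic operations), but it is a fleshing-out of the same argument rather than a different one.
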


\begin{proof}
We want to apply Proposition~\ref{prop:LojaAbstract} with $V= W^{2m+2,2}(M,g)$ and $Z=L^2(\mu)$. By Proposition~\ref{prop:Fredholm} and the discussion at the beginning of the section, we just need to check that $\cE_m$ and $\delta\cE_m$ are analytic as maps between Banach spaces.

We can rewrite
\[
\cE_m(f) = \int_M 1 + \sum_{\alpha=1}^{n+1} \scal{ \nabla^m \nu^\alpha_f, \nabla^m\nu^\alpha_f }  \, d\mu_f
\]
where $\nu_f$ is a unit normal along $\vfi+f\nu$ and $\mu_f$ is the volume measure induced by $\vfi + f\nu$. If $\psi:M\to\R^{n+1}$ is any immersion, we have that a unit normal along $\psi$ is $\nu_\psi = \star \frac{\partial_1\psi \wedge \ldots \wedge \partial_n \psi}{|\partial_1\psi \wedge \ldots \wedge \partial_n \psi|}$, where $\star$ denotes the Euclidean Hodge star operator. As $\psi$ is an immersion, we see that $\psi \mapsto \nu_\psi$ is analytic. It follows that $f \mapsto \nu_f$ is analytic as well. As the metric tensor induced by an immersion $\psi:M\to\R^{n+1}$ has components $g_{ij}=\scal{\partial_i \psi, \partial_j \psi}$, we get that the metric tensor of $\vfi+f\nu$ depends analytically on $f$ and then it is analytic the dependence of $\mu_f$ and of Christoffel symbols (and thus of the connection) on $f$. Then the integrand in the definition of $\cE_m$ is just a sum of compositions and multiplications of functions which are analytic in $f$. Finally, integration is linear on $L^1(\mu)$, then $f \mapsto \cE_m(f) \in \R$ is analytic for $f \in B_{\rho_0} (0) \subseteq W^{2m+2,2}(M,g)$.

By the very same arguments, one can check that also $f\mapsto(\delta\cE_m)_f$ is analytic. Hence, all the hypotheses of Proposition~\ref{prop:LojaAbstract} are satisfied and the thesis follows.
\end{proof}

The starting point for proving the smooth convergence of the gradient flow of $\cF_m$ is the following sub--convergence theorem.

\begin{thm}[{\cite[Theorem~7.8, Theorem~8.2]{Ma02}}]\label{thm:Subconvergence}
Let $\vfi_0:M^n\to\R^{n+1}$ be a smooth immersion and let $m > \lfloor n/2 \rfloor$. Then there exists a unique smooth solution $\vfi:M\times [0,+\infty)\to \R^{n+1}$ to the evolution equation
\[
\begin{cases}
\partial_t \vfi = - \EE_m(\vfi_t) \nu_t,\\
\vfi(\cdot,0)=\vfi_0,
\end{cases}
\]
where $\nu_t$ denotes a unit normal vector field along $\vfi_t\coloneqq \vfi(\cdot,t)$.
Moreover, the solution satisfies the estimates
\begin{equation}\label{univest}
\|\nabla^k \AAA_t\|_{L^{\infty}(M,g_t)} \le C(k,n,\vfi_0),
\end{equation}
for any $t\in[0,+\infty)$, where $\AAA_t$ and $g_t$ are the second fundamental form and the metric of $\vfi_t$ respectively and there exists a smooth critical point $\vfi_\infty:M\to\R^{n+1}$ of $\cF_m$, a sequence of times $t_j\to+\infty$ and a sequence of points $p_j\in\R^{n+1}$ such that
\[
\| \vfi_{t_j} \circ \sigma_j - p_j - \vfi_\infty \|_{C^k(M)} \xrightarrow[j\to+\infty]{}0,
\]
for any $k \in \N$, where $\sigma_j$ is a sequence of diffeomorphisms of $M$.
\end{thm}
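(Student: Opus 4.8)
The statement is proved in~\cite{Ma02} (there split into Theorems~7.8 and~8.2); I outline the strategy one would follow. Throughout one may assume $M$ orientable with a globally defined $\nu$ (Remark~\ref{rem:Orientability}), observing that the driving term $\EE_m(\vfi_t)\nu_t$ does not depend on this choice. The plan consists of four stages: short--time existence and uniqueness, a priori estimates uniform in time, long--time existence, and sub--convergence. For short--time existence, the first observation is that by Theorem~\ref{thm:FirstVariation} and the Gauss--Weingarten relations~\eqref{gwein} (which give $\De_g\vfi=-\HHH\,\nu$), the flow $\partial_t\vfi=-\EE_m(\vfi_t)\nu_t$ is, modulo tangential components and lower order terms, a quasilinear system of order $2m+2$ whose principal part is governed by $\De^{m+1}$ acting on the immersion, and which is parabolic in the normal direction. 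To handle the tangential degeneracy I would either represent $\vfi_t$ as a normal graph over the reference immersion $\vfi_0$, reducing the problem to a single scalar quasilinear parabolic equation of order $2m+2$, or introduce a DeTurck--type modification; then the standard $L^p$-- or H\"older--theory for higher order parabolic equations, combined with a fixed point argument and parabolic bootstrap, yields a unique smooth solution on a maximal interval $[0,T_{\max})$. Uniqueness for the original geometric flow follows because it is purely normal, so the graph parametrisation over $\vfi_0$ is canonical for short times and the corresponding scalar Cauchy problem is well posed.

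The core of the proof, and the step I expect to be the main obstacle, is to obtain a priori estimates with constants \emph{independent of time}. I would begin with the energy identity: by Theorem~\ref{thm:FirstVariation}, $\frac{d}{dt}\cF_m(\vfi_t)=-\int_M\EE_m(\vfi_t)^2\,d\mu_t\le0$, whence $\cF_m(\vfi_t)\le\cF_m(\vfi_0)=:E_0$ for all $t$ and $\int_0^{T_{\max}}\int_M\EE_m^2\,d\mu_t\,dt\le E_0$. In particular $\mu_t(M)\le E_0$ and $\int_M|\nabla^m\nu_t|^2\,d\mu_t\le E_0$, which via~\eqref{gwein} and interpolation controls $\AAA_t$ together with its first $m-1$ covariant derivatives in $L^2(\mu_t)$; since $m>\lfloor n/2\rfloor$, the Sobolev embedding on $(M,g_t)$ then yields pointwise control of the lower order curvature quantities. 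Next I would write down the evolution equations of the iterated covariant derivatives $\nabla^k\AAA$, which by computations of the type carried out in Lemma~\ref{lem:Derivate} have the schematic form $\partial_t\nabla^k\AAA=2(-1)^{m+1}\De^{m+1}\nabla^k\AAA+\cq(\AAA)$, with $\cq(\AAA)$ a universal polynomial in $\AAA$ and its covariant derivatives of controlled order. Testing against $\nabla^k\AAA$, integrating by parts and estimating the nonlinear terms by Gagliardo--Nirenberg interpolation inequalities on $(M,g_t)$ --- and it is exactly here that the hypothesis $m>\lfloor n/2\rfloor$ enters, being the borderline that makes the interpolation close --- one arrives at differential inequalities for $\int_M|\nabla^k\AAA_t|^2\,d\mu_t$ whose constants depend only on $E_0$, $n$ and $k$, and not on $t$ or $T_{\max}$. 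This gives $\int_M|\nabla^k\AAA_t|^2\,d\mu_t\le C(k,n,\vfi_0)$ for all $t\in[0,T_{\max})$ and, iterating the Sobolev embedding over $k$, the pointwise bounds~\eqref{univest}. The delicate points are tracking the time (in)dependence of all the constants and the interpolation bookkeeping at the critical threshold.

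Long--time existence is then a standard continuation argument: if $T_{\max}<\infty$, the bounds~\eqref{univest} together with $\partial_t g_{ij}=-2\EE_m h_{ij}$, $\partial_t\vfi=-\EE_m\nu$ and the fact that $\EE_m$ is a controlled expression in $\AAA$ and its derivatives force $g_t$ to remain uniformly equivalent to $g_0$ and all derivatives of $\vfi_t$ to stay bounded on $[0,T_{\max})$; hence $\vfi_t$ converges smoothly to a smooth immersion as $t\to T_{\max}^-$ and the flow can be restarted, contradicting maximality, so $T_{\max}=+\infty$. Finally, for sub--convergence I would use $\int_0^\infty\int_M\EE_m^2\,d\mu_t\,dt<\infty$ to pick $t_j\to+\infty$ with $\int_M\EE_m(\vfi_{t_j})^2\,d\mu_{t_j}\to0$; the bounds~\eqref{univest}, the area bound $\mu_{t_j}(M)\le E_0$, together with a uniform lower area bound and a uniform extrinsic diameter bound (both consequences of the bound on $\|\AAA_{t_j}\|_\infty$ and the connectedness of $M$), make all $M_{t_j}$, after a translation by suitable $p_j\in\R^{n+1}$, lie in a fixed ball, so that a compactness theorem for immersions of $M$ with uniformly bounded geometry provides diffeomorphisms $\sigma_j:M\to M$ with $\vfi_{t_j}\circ\sigma_j-p_j\to\vfi_\infty$ in $C^k(M)$ for every $k$, for some smooth immersion $\vfi_\infty$. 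Since $\EE_m$ is invariant under reparametrisations and translations and depends continuously on the immersion in the $C^\infty$ topology, passing to the limit in $\int_M\EE_m(\vfi_{t_j}\circ\sigma_j-p_j)^2\,d\mu=\int_M\EE_m(\vfi_{t_j})^2\,d\mu_{t_j}\to0$ gives $\EE_m(\vfi_\infty)\equiv0$, i.e.\ $\vfi_\infty$ is a critical point of $\cF_m$, which completes the argument.
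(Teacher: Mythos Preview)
The paper does not give a proof of this statement: it is quoted verbatim from~\cite{Ma02} (Theorems~7.8 and~8.2) and used as a black box, so there is no ``paper's own proof'' to compare against. You correctly recognise this and offer instead an outline of the argument in~\cite{Ma02}; the four--stage structure (short--time existence via a DeTurck/normal--graph reduction, time--uniform a priori estimates through the energy identity and Gagliardo--Nirenberg interpolation exploiting $m>\lfloor n/2\rfloor$, continuation to $T_{\max}=+\infty$, and sub--convergence by compactness of immersions with uniformly bounded geometry) is indeed the strategy of that reference, and your identification of the interpolation step as the place where the hypothesis on $m$ is decisive is accurate.
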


We need a preliminary lemma.

\begin{lemma}\label{lem:GraficoNormale}
Let $\vfi_0,\vfi_t,\vfi_\infty,\sigma_j,t_j,p_j$ be as in Theorem~\ref{thm:Subconvergence}.
Then, for any $\ep>0$ there is $j_\ep\in\N$ such that for any $j\ge j_\ep$ there exists $\delta_j>0$ such that the immersion $\vfi_t-p_j$ coincides with $\vfi_\infty + f_t\nu_\infty$ up to diffeomorphism, where $\nu_\infty$ is a unit normal vector along $\vfi_\infty$, for some ``height'' functions $f_t \in C^\infty(M)$ smoothly depending on $t\in[t_j,t_j+\delta_j)$. Moreover,
\[
\|f_t\|_{W^{2m+2,2}(M,g_\infty)} \le \ep,
\]
for any $t \in [t_j,t_j+\delta_j)$.
\end{lemma}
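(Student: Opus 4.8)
The plan is to convert the $C^k$--convergence provided by Theorem~\ref{thm:Subconvergence} into the statement that, near each time $t_j$, the moving hypersurface $\vfi_t - p_j$ can be written as a normal graph over $\vfi_\infty$ with small height function. First I would fix $\ep > 0$. Since $\vfi_\infty$ is a \emph{compact} immersed hypersurface, it admits a tubular neighborhood: there is $\rho_1 > 0$ such that the map $\Phi\colon M \times (-\rho_1,\rho_1) \to \R^{n+1}$, $\Phi(p,s) = \vfi_\infty(p) + s\,\nu_\infty(p)$, is an immersion which is, locally in $M$, a diffeomorphism onto its image; more precisely for every $q$ in a $\rho_1$--neighborhood (in the normal bundle sense) of $\vfi_\infty(M)$ there is a well-defined nearest-point projection onto $\vfi_\infty(M)$ and a well-defined normal coordinate. (If $\vfi_\infty$ is not embedded one works locally, patching over a finite cover of $M$ by sets on which $\vfi_\infty$ is an embedding; compactness makes the constants uniform.)

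Next I would use the $C^1$--subconvergence $\|\vfi_{t_j}\circ\sigma_j - p_j - \vfi_\infty\|_{C^1(M)} \to 0$ to pick $j_\ep$ so large that for $j \ge j_\ep$ the immersion $\vfi_{t_j}\circ\sigma_j - p_j$ lies within the tubular neighborhood of $\vfi_\infty$ and is $C^1$--close enough to $\vfi_\infty$ that the composition of $\vfi_{t_j}\circ\sigma_j - p_j$ with the nearest-point projection onto $\vfi_\infty(M)$ is a diffeomorphism $\tau_j\colon M \to M$ isotopic to the identity (a small $C^1$--perturbation of $\vfi_\infty$ composed with the projection is a local diffeo everywhere and is bijective, hence a diffeo, by degree theory / the inverse function theorem plus compactness). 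Then, after reparametrizing by $\sigma_j$ and $\tau_j^{-1}$, the immersion $(\vfi_{t_j} \circ \sigma_j \circ \tau_j^{-1}) - p_j$ is exactly of the form $\vfi_\infty + f_{t_j}\nu_\infty$ for a smooth height function $f_{t_j}$, with $\|f_{t_j}\|_{C^0}$ (indeed $\|f_{t_j}\|_{C^1}$) controlled by the $C^1$--distance, hence arbitrarily small. Since $\partial_t \vfi$ is smooth and $M\times\{t_j\}$ is compact, by continuity of the flow and of all the above constructions there is $\delta_j > 0$ such that the same normal-graph representation persists for $t \in [t_j, t_j + \delta_j)$ with $f_t$ depending smoothly on $t$; shrinking $\delta_j$ if necessary keeps $\|f_t\|_{C^1}$ small.

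Finally I would upgrade the smallness from $C^1$ (or $C^0$) to the $W^{2m+2,2}(M,g_\infty)$--norm. Here I use the uniform curvature bounds \eqref{univest}: along the flow all $\|\nabla^k \AAA_t\|_{L^\infty(g_t)}$ are bounded by constants independent of $t$, and since the metrics $g_t$ are uniformly equivalent to $g_\infty$ on the relevant time intervals (again by the subconvergence and the smooth dependence on $t$), the height functions $f_t$ — which are built from $\vfi_t$ and the fixed data of $\vfi_\infty$ via the smooth tubular-neighborhood map — satisfy uniform bounds on \emph{all} their higher covariant derivatives with respect to $g_\infty$. Thus the family $\{f_t : t \in [t_j, t_j+\delta_j), j \ge j_\ep\}$ is bounded in, say, $W^{2m+3,2}(M,g_\infty)$, while $f_t \to 0$ in $C^0$ as $j \to \infty$; by the interpolation inequality \eqref{eq:Interpolation} (or, more simply, by compactness of the embedding $W^{2m+3,2} \hookrightarrow W^{2m+2,2}$ together with the fact that every weak subsequential limit must vanish), one concludes $\|f_t\|_{W^{2m+2,2}(M,g_\infty)} \le \ep$ for $j \ge j_\ep$ (enlarging $j_\ep$ and shrinking $\delta_j$ as needed). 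The main obstacle is the bookkeeping in the second and third steps: making precise that the curvature bounds \eqref{univest} plus smooth dependence of the flow translate into uniform-in-$t$ Sobolev bounds on the \emph{height functions} $f_t$ (not just on $\vfi_t$), so that the already-known $C^0$--smallness can be boosted to $W^{2m+2,2}$--smallness via interpolation; the tubular-neighborhood and diffeomorphism-extraction arguments are standard once compactness of $M$ and $C^1$--subconvergence are in hand.
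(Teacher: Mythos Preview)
Your proposal is correct and the overall architecture (tubular neighborhood, nearest-point projection to extract a reparametrization, then control the height function) matches the paper. However, you take an unnecessarily indirect route to the $W^{2m+2,2}$--smallness. The paper simply fixes $k>2m+2$ \emph{from the outset} and applies the full $C^k$--subconvergence of Theorem~\ref{thm:Subconvergence}: since $f_t$ is given explicitly in terms of $\vfi_t\circ\sigma_j - p_j$ and the smooth fixed data $\pi$, $\nu_\infty$, $\vfi_\infty^{-1}$ (see the formula~\eqref{eq:DefFunzione}), the $C^k$--smallness of $\vfi_t\circ\sigma_j - p_j - \vfi_\infty$ transfers directly to $C^k$--smallness of $f_t$, which on the compact manifold $M$ dominates the $W^{2m+2,2}$--norm. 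No curvature bounds or interpolation are needed at this stage.

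Your alternative---use only $C^1$--closeness to set up the graph, invoke the uniform estimates~\eqref{univest} to obtain a priori bounds on the higher derivatives of $f_t$, then interpolate against the $C^0$--smallness---also works, and is in fact exactly the mechanism the paper deploys \emph{later}, in the proof of Theorem~\ref{thm:Convergence}, where one must show that the normal-graph bound persists for \emph{all} $t\ge t_j$ and the $C^k$--subconvergence alone gives no information away from the subsequence. For the present lemma, though, only a short interval $[t_j,t_j+\delta_j)$ is required, so the paper's direct use of high--$k$ subconvergence is cleaner and sidesteps the ``bookkeeping'' obstacle you flag.
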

\begin{proof}
Fixed $\theta>0$ and $k>2m+2$, by Theorem~\ref{thm:Subconvergence} there is $j_\theta$ such that for any $j\ge j_\theta$ we have
\begin{equation}\label{eq:Jepsilon}
\| \vfi_{t} \circ \sigma_j - p_j - \vfi_\infty \|_{C^k(M)} <\theta,
\end{equation}
for every $t \in [t_j,t_j+\delta_j)$, for some $\delta_j>0$.\\
Let us assume that $\vfi_\infty$ is an embedding. The general statement analogously follows by recalling that immersions are local embeddings. So for $j_\theta$ large enough, $\vfi_t$ is an embedding as well for every $t \in [t_j,t_j+\delta_j)$. Moreover there exists $U\subseteq \R^{n+1}$ open set containing $N\coloneqq \vfi_\infty(M)$ such that it is well-defined the projection map $\pi:U\to N$ as
\[
\pi(p) = p - \frac12 \nabla^{\R^{n+1}} d_N^2(p),
\]
where $d_N$ is the distance function from $N$. The vector $\tfrac12 \nabla^{\R^{n+1}} d_N^2(p)$ is orthogonal to $N$ at $\pi(p)$, $\pi$ is smooth on $U$ and for $j_\theta$ sufficiently large we have that $(\vfi_t\circ \sigma _j(M)-p_j)\subseteq U$ for every $t \in [t_j,t_j+\delta_j)$ (for a proof of these facts see~\cite[Proposition~4.2]{MaMe03}).\\
Hence, for $x\in M$ the ``height'' function $f_t(x)$ is uniquely determined by the identity
\[
\vfi_t \circ \sigma_j (x) - p_j = \pi(\vfi_t\circ \sigma_j (x) - p_j ) + f_t(x) \nu_\infty (\vfi_\infty^{-1}\circ \pi \circ  (\vfi_t\circ \sigma_j (x) - p_j ) ),
\]
that is,
\begin{equation}\label{eq:DefFunzione}
f_t(x) = \scal{  \vfi_t \circ \sigma_j (x) - p_j - \pi(\vfi_t\circ \sigma_j (x) - p_j ) , \nu_\infty (\vfi_\infty^{-1}\circ \pi \circ  (\vfi_t\circ \sigma_j (x) - p_j ) )}.
\end{equation}
Then, the map $(x,t)\mapsto f_t(x)$ is smooth on $M\times [t_j,t_j+\delta_j)$ and $\|f_t\|_{W^{2m+2,2}(M,g_\infty)} \to 0$ as $\theta\to0$, by inequality~\eqref{eq:Jepsilon} and the fact that $k>2m+2$.\\
Hence, for the chosen $\ep>0$, taking a suitable $\theta>0$ we have the estimate in the statement of the lemma.
\end{proof}

We are now ready for proving our main result. The proof of Theorem~\ref{thm:Convergence} is essentially a generalization of the strategy employed in~\cite{MaPo20Loja} to show the smooth convergence of the elastic flow of closed curves in $\R^n$.

\begin{proof}[Proof of Theorem~\ref{thm:Convergence}.]
Let $\vfi_0,\vfi_t,\vfi_\infty,\sigma_j,t_j,p_j$ be as in Theorem~\ref{thm:Subconvergence}.
Fixed $k>2m+2$ and chosen $\ep>0$ smaller than the constant $\theta$ given by Corollary~\ref{cor:Loja}, relative to the critical point $\vfi_\infty$, by Theorem~\ref{thm:Subconvergence} and Lemma~\ref{lem:GraficoNormale}, there exists $j_\ep\in \N$ such that for every $j\ge j_\ep$ we have
\begin{equation}\label{eq:Jepsilon2}
\| \vfi_{t} \circ \sigma_j - p_j - \vfi_\infty \|_{C^k(M)} <\ep,
\end{equation}
for every $t \in [t_j,t_j+\delta_j)$ with some $\delta_j>0$, moreover, $\vfi_t\circ \sigma_j - p_j$ coincides with $\vfi_\infty + f_t\nu_\infty$, up to diffeomorphism, for the functions $f_t$ given by Lemma~\ref{lem:GraficoNormale} (we recall that $f_t \in C^{\infty}(M)$ depends on $j$), satisfying
\begin{equation}\label{eq:StimaSigma}
\|f_t \|_{W^{2m+2,2}(M,g_\infty)} < \ep<\theta,
\end{equation}
for every $t \in [t_j,t_j+\delta_j)$.\\
We claim that it is possible to choose $\ep>0$ small enough such that for any fixed $j\geq j_\ep$, the hypersurfaces $\vfi_t\circ \sigma_j - p_j$ coincide with $\vfi_\infty + f_t\nu_\infty$ (up to diffeomorphism) for some smooth functions $f_t$ with $\|f_t \|_{W^{2m+2,2}(M,g_\infty)} < \theta$ for any $t\in [t_j,+\infty)$.\\
We define
\[
H(t)\coloneqq |\cF_m(\vfi_t)-\cF_m(\vfi_\infty)|^{\alpha},
\]
where $\alpha\in (0,1/2]$ is as in Corollary~\ref{cor:Loja} applied to the critical point $\vfi_\infty$ and, without loss of generality, we can clearly assume that $H(t)>0$ for any $t$. As $\cF_m(\vfi_t) = \cF_m(\vfi_\infty+ f_t\nu_\infty)$, by Corollary~\ref{cor:Loja} we have
\[
\begin{split}
    H(t)^{\frac{1-\alpha}{\alpha}}
    &\le C \| (\delta\cE_m)_{f_t}\|_{L^2(\mu_\infty)^\star}\\ 
    &= C \Bigl( \int_M | \EE_m(\vfi_\infty + f_t \nu_\infty ) \scal{\nu_t,\nu_\infty} |^2 \det g_t \, d\mu_\infty \Bigr)^{1/2} \\
    &\le C(\vfi_\infty,\theta) \Bigl( \int_M | \EE_m(\vfi_\infty + f_t \nu_\infty )  |^2 \, d\mu_t \Bigr)^{1/2}  \\
    &= C(\vfi_\infty,\theta) \| \EE_m(\vfi_t) \|_{L^2(\mu)},
\end{split}
\]
where $\nu_t$, $g_t$, $\mu_t=\det g_t \, d\mu_\infty$ are the unit normal, metric tensor and volume measure on $\vfi_\infty + f_t\nu_\infty$ and we estimated $\sqrt{\det g_t}\le C(\vfi_\infty,\theta)$, for any $t\ge t_j$ such that 
$$
\|f_t \|_{W^{2m+2,2}(M,g_\infty)} < \theta.
$$
Differentiating $H$ and using the above inequality, we obtain
\[
\begin{split}
    \partial_t H(t) 
    &= \alpha H^{\frac{\alpha-1}{\alpha}} \partial_t \cF_m (\vfi_t) \\
    & = \alpha H^{\frac{\alpha-1}{\alpha}} \int_M \scal{ \EE_m(\vfi_t) , \partial_t^\perp \vfi_t } \, d\mu \\
    &= - \alpha H^{\frac{\alpha-1}{\alpha}} \int_M | \EE_m(\vfi_t) | | \partial_t^\perp \vfi_t | \, d\mu \\
    &= - \alpha H^{\frac{\alpha-1}{\alpha}} \| \EE_m(\vfi_t) \|_{L^2(\mu)} \| \partial_t^\perp \vfi_t\|_{L^2(\mu)} \\
    &\le - \alpha C(\vfi_\infty, \theta) \| \partial_t^\perp \vfi_t\|_{L^2(\mu)},
\end{split}
\]
for any $t\ge t_j$ such that $\|f_t \|_{W^{2m+2,2}(M,g_\infty)} < \theta$. For such times, possibly choosing a smaller $\ep$, we can assume that $|\nu_t- \nu_\infty|<1/2$. Letting $\widetilde{\vfi}_t\coloneqq \vfi_\infty + f_t \nu_\infty$ we thus get
\[
\begin{split}
    |\partial_t^\perp \widetilde{\vfi}_t| 
    &= |\scal{ \partial_t \widetilde{\vfi}_t, \nu_t} \nu_t|\\
    &= |\scal{ \partial_t \widetilde{\vfi}_t, \nu_{\infty}} \nu_t +\scal{ \partial_t \widetilde{\vfi}_t, \nu_t-\nu_\infty } \nu_t|\\
    & \ge |\scal{ \partial_t \widetilde{\vfi}_t, \nu_{\infty}}| - \frac12 | \partial_t \widetilde{\vfi}_t|\\
    &= \frac12 | \partial_t \widetilde{\vfi}_t|
\end{split}
\]
and the above estimate becomes
\[
    \partial_t H(t)\le - \alpha C(\vfi_\infty, \theta) \| \partial_t^\perp \vfi_t\|_{L^2(\mu)}= - \alpha C(\vfi_\infty, \theta) \| \partial_t^\perp \widetilde{\vfi}_t\|_{L^2(\mu_t)}\le - \alpha C(\vfi_\infty, \theta) \| \partial_t \widetilde{\vfi}_t\|_{L^2(\mu_t)}
\]
for any $t\ge t_j$ such that $\|f_t \|_{W^{2m+2,2}(M,g_\infty)} < \theta$. Integrating the above differential inequality and estimating $\sqrt{\det g_t} \ge C(\vfi_\infty,\theta)>0$, we obtain
\[
\begin{split}
    \| \widetilde{\vfi}_{\tau_2} - \widetilde{\vfi}_{\tau_1}  \|_{L^2(\mu_\infty)} 
    & = \left\| \int_{\tau_1}^{\tau_2} \partial_t \widetilde{\vfi}_t \, dt\, \,\right\|_{L^2(\mu_\infty)}\\
    &\le C(\vfi_\infty,\theta) \int_{\tau_1}^{\tau_2} \left\|\partial_t \widetilde{\vfi}_t \right\|_{L^2(\mu_t)} \, dt \\
    & \le C(\alpha,\vfi_\infty,\theta) ( H(\tau_1)- H(\tau_2) ) \\
    & \le C(\alpha,\vfi_\infty,\theta) ( \cF_m(\vfi_{\tau_1}) - \cF_m(\vfi_\infty) )^\alpha
\end{split}
\]
then, since possibly choosing a larger $j_\ep$ we can assume that $\cF_m(\vfi_{t_{j_\ep}}) - \cF_m(\vfi_\infty) \le \ep^{1/\alpha}$, we see that
\begin{equation}\label{eq:Cauchy}
    \| \widetilde{\vfi}_{\tau_2} - \widetilde{\vfi}_{\tau_1}  \|_{L^2(\mu_\infty)} \le C(\alpha,\vfi_\infty,\theta) \ep
\end{equation}
for any $\tau_2\ge \tau_1 \ge t_j$ such that $\|f_t \|_{W^{2m+2,2}(M,g_\infty)} < \theta$ on $t\in[t_j,\tau_2]$. Finally, since $\| f_t \|_{L^2(\mu_\infty)} = \| \widetilde{\vfi}_t - \vfi_\infty\|_{L^2(\mu_\infty)}$, we get
\begin{equation}\label{eq:CauchyFt}
	\| f_t \|_{L^2(\mu_\infty)} \le  \| \widetilde{\vfi}_t - \widetilde \vfi_{t_j}\|_{L^2(\mu_\infty)} +  \| \widetilde{\vfi}_{t_j} - \vfi_\infty\|_{L^2(\mu_\infty)} \le C(\alpha,\vfi_\infty,\theta) \ep
\end{equation}
for any $t\ge t_j$ such that $\|f_t \|_{W^{2m+2,2}(M,g_\infty)} < \theta$.\\
Since $m>\lfloor n/2 \rfloor$, estimate~\eqref{eq:StimaSigma} implies that the hypersurfaces $\widetilde{\varphi}_t$ are represented as graph on $\varphi_\infty$ by means of functions $f_t$ with uniformly equibounded gradients (such bound clearly depends on $\ep$ and goes to zero with it). Also, the inequalities~\eqref{univest} clearly hold also for the second fundamental form of the hypersurfaces $\varphi_t\circ\sigma_j$ and $\widetilde{\varphi}_t$, since they coincide with $\vfi_t$ up to diffeomorphism (and translation). These facts imply uniform estimates on the ``height'' functions $f_t$ in $W^{r,\infty}(M,g_\infty)$; namely, for any $r \in \N$ we have
\begin{equation}\label{eq:UniformEstimatesFt}
	\|f_t\|_{W^{r,\infty}(M,g_\infty)} \le C(r,n,\vfi_0,\vfi_\infty),
\end{equation}
for any $t \in [t_j,t_j+\delta_j)$ (a tedious but straightforward way to see this is to differentiate formula~\eqref{eq:DefFunzione} and use Gauss--Weingarten relations~\eqref{gwein}, taking into account that the closeness in $W^{2m+2,2}(M,g_\infty)$ implies that 
the metric tensor and the Christoffel symbols of the covariant derivative of $\widetilde{\varphi}_t$ are mutually ``comparable'' with the ones relative to $\varphi_\infty$). Hence, if $r>2m+2$ and $\ep>0$ is small enough, combining estimates~\eqref{eq:CauchyFt} and~\eqref{eq:UniformEstimatesFt}, the interpolation inequalities~\eqref{eq:Interpolation} imply that
\[
\|f_t \|_{W^{2m+2,2}(M,g_\infty)} < \theta,
\]
for any $t\in [t_j,t_j+\delta_j)$. By a maximality argument, it clearly follows that we can take $\delta_j=+\infty$, for every $j\geq j_\ep$. Hence, the estimate~\eqref{eq:Cauchy}, which then holds for any $t\geq t_j$, implies that the flow $\widetilde{\vfi}_t$ satisfies the Cauchy criterion for convergence in $L^2(\mu_\infty)$, hence $\widetilde{\vfi}_t$ converges in $L^2(\mu_\infty)$, as $t\to+\infty$.
Interpolating as before by means of inequalities~\eqref{eq:UniformEstimatesFt}, the same holds for $\widetilde{\vfi}_t$ in $W^{r,2}(M,g_\infty)$, for any $r\in \N$ and, by Sobolev embeddings, we thus deduce that there exists the limit $\lim_{t\to+\infty}\widetilde{\vfi}_t$ in $C^r(M)$ for any $r \in \N$. Therefore, the same conclusion holds for the original flow $\vfi_t$, up to diffeomorphism.
\end{proof}

\bibliographystyle{amsplain}
\bibliography{biblio}

\end{document}